\newtheorem{theorem}{Theorem}[section]
\newtheorem{corollary}[theorem]{Corollary}
\newtheorem{lemma}[theorem]{Lemma}
\newtheorem{claim}[theorem]{Claim}
\theoremstyle{definition}
\newtheorem{definition}[theorem]{Definition}
\newtheorem{remark}[theorem]{Remark}
\newcommand{\e}{\varepsilon}
\newcommand{\Dim}{\mathrm{Dim}}
\newcommand{\SDim}{\mathsf S\mbox{-}\mathsf{Dim}}
\newcommand{\Sdim}{\mathsf S\mbox{-}\mathrm{dim}}
\newcommand{\Hdim}{\mathsf{H\ddot o}\mbox{-}\mathrm{dim}}
\newcommand{\HoDim}{\mathsf{H\ddot o}\mbox{-}\mathrm{Dim}}
\newcommand{\HDim}{\HoDim}
\newcommand{\Ra}{\Rightarrow}
\newcommand{\C}{\mathcal C}
\newcommand{\U}{\mathcal U}
\newcommand{\A}{\mathcal A}
\newcommand{\M}{\mathcal M}
\newcommand{\w}{\omega}
\newcommand{\IR}{\mathbb R}
\newcommand{\Lip}{\mathrm{Lip}}
\newcommand{\defeq}{\coloneqq}
\newcommand{\diam}{\mathrm{diam}}
\newcommand{\F}{\mathcal F}
\newcommand{\St}{\mathcal F^\star}
\newcommand{\IN}{\mathbb N}
\newcommand{\mesh}{\mathrm{mesh}}
\newcommand{\TM}[1]{\textcolor{blue}{#1}}
\title[A controlled Hahn-Mazurkiewicz Theorem and its applications]{A controlled Hahn-Mazurkiewicz Theorem\\ and its applications}
\author{Taras Banakh, Tetiana Martyniuk, Magdalena Nowak, Filip Strobin}
\address{T. Banakh: Ivan Franko National University of Lviv (Ukraine), and Jan Kochanowski University in Kielce (Poland); ORCID 0000-0001-6710-4611}
\email{t.o.banakh@gmail.com}
\address{T.~Martyniuk: INRIA de Paris (France), and Ukrainian Catholic University (Ukraine)}
\email{tetyanka.martynyuk@gmail.com}
\address{M.~Nowak: Jan Kochanowski University in Kielce (Poland); ORCID 0000-0003-1915-0001}
\email{magdalena.nowak805@gmail.com}
\address{F. Strobin: Institute of Mathematics, Lodz University of Technology, Al. Politechniki 8, 93-590 Łódź (Poland);
ORCID 0000-0002-8671-9053}
\email{filip.strobin@p.lodz.pl}
\begin{document}

\begin{abstract} For a metric Peano continuum $X$, let $S_X$ be a Sierpi\'nski function assigning to each $\e>0$ the smallest cardinality of a cover of $X$ by connected subsets of diameter $\le \e$. We prove that for any increasing function $\Omega:\IR_+\to\IR_+$ with $(0,1]\subseteq\Omega[\IR_+]$ and $s:=\sum_{n=1}^\infty S_X(2^{-n})\sum_{m=n}^\infty S_X(2^{-m})\,\Omega^{-1}(\min\{1,2^{6-m}\})<\infty$ there exists a continuous surjective function $f:[0,s]\to X$ with continuity modulus $\w_f\le\Omega$. This controlled version of the classical Hahn-Mazurkiewicz Theorem implies that $\SDim(X)\le\HDim(X)\le 2{\cdot}\SDim(X)$, where $\SDim(X)=\limsup_{\e\to 0}\frac{\ln(S_X(\e))}{\ln(1/\e)}$ is the {\em $S$-dimension} of $X$\TM{,} and  $\HDim(X)=\inf\{\alpha\in (0,\infty]:$ there is a~surjective $\frac1\alpha$-H\"older map $f:[0,1]\to X\}$ is the {\em H\"older dimension} of $X$.
\end{abstract}

\maketitle

\section{Introduction}

A Hausdorff topological space $X$ is called a {\em Peano continuum} if there exists a surjective continuous map $f:[0,1]\to X$. 
By the classical Hahn-Mazurkiewicz Theorem \cite{H}, \cite{M} (see also \cite[8.14]{N}), a  Hausdorff topological space $X$ is a Peano continuum if and only if $X$ is compact, metrizable, connected, locally connected, and nonempty. 
According to Sierpi\'nski \cite{S} (see also \cite[8.4]{N}), a connected compact metric space $X$ is a Peano continuum if and only if it {\em has property S},  i.e., for every $\e>0$ the space $X$ has a finite cover by connected subsets of diameter $\le\e$. 
The smallest possible  cardinality of such a cover will be denoted by $S_X(\e)$. 
The Sierpi\'nski function $S_X$ can be used to analyze a decompositional complexity of a metric Peano continuum $X$.
 
Since every Peano continuum $X$ is the image of the interval $[0,1]$ under a continuous surjective map $f:[0,1]\to X$, it is natural to ask whether such a map $f$ can have some additional properties, for example, be Lipschitz or H\"older, or have some other restrictions on its continuity modulus.
 
Let $\IR_+:=(0,\infty)$ be the open half-line and $[0,\infty]:=[0,\infty)\cup\{\infty\}$ be the  closed half-line with added infinity. For a function $f:X\to Y$ between  metric spaces $(X,d_X)$ and $(Y,d_Y)$, the {\em continuity modulus} of $f$ is the function
$$\w_f:\IR_+\to[0,\infty],\quad\w_f:\delta\mapsto\sup(\{d_Y(f(x),f(x')):x,x'\in X,\;d_X(x,x')\le\delta\}\cup\{0\}).$$

A function $f:X\to Y$ between metric spaces is called {\em $\alpha$-H\"older} with {\em H\"older constant} $\alpha\ge 0$ if $\sup_{t\in\IR_+}\frac{\w_f(t)}{t^\alpha}<\infty$, which is equivalent to the existence of a positive real constant $C$ such that
$$d_Y(f(x),f(x'))\leq C\cdot d_X(x,x')^\alpha\quad\mbox{for every  $x,x'\in X$.}$$ $1$-H\"older functions are called {\em Lipschitz}. Observe that every function $f:X\to Y$ to a bounded metric space $Y$ is $0$-H\"older. 

The following quantitative generalization of the Hahn-Mazurkiewicz Theorem is the main result of this paper.

\begin{theorem}\label{t:main} Let $X$ be a nonempty compact connected metric space of diameter $\le 1$ and $\Omega:\IR_+\to\IR_+$ be an increasing function such that $(0,1]\subseteq \Omega[\IR_+]$. If $$s\defeq\sum_{n=1}^\infty S_X(2^{-n})\sum_{m=n}^\infty S_X(2^{-m}){\cdot}\Omega^{-1}(\min\{1,2^{6-m}\})<\infty,$$ then there exists a continuous surjective function $f:[0,s]\to X$ with continuity modulus $\w_f\le\Omega$.
\end{theorem}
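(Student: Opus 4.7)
The plan is to construct the surjection $f:[0,s]\to X$ as the uniform limit of a sequence of continuous ``chain-following'' maps $f_n$, whose time parametrizations are carefully calibrated by $\Omega^{-1}$.

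\textbf{Chain structure.} First I will fix, for every $n\ge 1$, an optimal cover $\F_n$ of $X$ by closed connected subsets of diameter at most $2^{-n}$ with $|\F_n|=S_X(2^{-n})$. Because $X$ is connected, the nerve of $\F_n$ is connected; a depth-first traversal of a spanning tree of this nerve produces an enumeration $(F_{n,1},\dots,F_{n,k_n})$ in which consecutive sets share a point, using each set at most twice so that the effective chain length is at most $2S_X(2^{-n})$. Next I will synchronize the chains across levels: assign each piece of $\F_{n+1}$ to a unique parent piece of $\F_n$ meeting it, and reorder the level-$(n{+}1)$ chain so that the children of each level-$n$ piece are visited consecutively. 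This combinatorial bookkeeping, which must respect the bound $|\F_n|=S_X(2^{-n})$ while producing a genuine chain at every level and refining the previous chain, will be the main obstacle.

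\textbf{Parametrization and approximation.} I will partition $[0,s]$ recursively, assigning each occurrence of a level-$n$ piece in the chain a subinterval of length
\[
T_n\defeq\sum_{m=n}^\infty S_X(2^{-m})\,\Omega^{-1}(\min\{1,2^{6-m}\}),
\]
so that $\sum_{n}S_X(2^{-n})\,T_n=s$, matching the hypothesis exactly. Within each level-$n$ subinterval, a fraction $S_X(2^{-m})\,\Omega^{-1}(\min\{1,2^{6-m}\})$ of the budget is reserved for the traversal of all level-$m$ descendants $(m\ge n)$ of that piece. I then define $f_n:[0,s]\to X$ piecewise: on the subinterval associated with $F_{n,i}$ the map takes values in $F_{n,i}$, and at the boundary with the next subinterval it passes through a point of $F_{n,i}\cap F_{n,i+1}$, yielding continuity. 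By construction $\sup_{t\in[0,s]}d_X(f_n(t),f_{n+1}(t))\le 2^{-n}$, so $(f_n)$ is uniformly Cauchy and converges to a continuous $f:[0,s]\to X$.

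\textbf{Surjectivity and modulus estimate.} Since $f_n([0,s])$ meets every element of $\F_n$ and $\sup_{F\in\F_n}\diam F\to 0$, the image $f([0,s])$ is dense in $X$, hence equal to $X$ by compactness. For the modulus, given $\delta>0$ I will let $m$ be the largest integer with $\Omega^{-1}(\min\{1,2^{6-m}\})\ge\delta$; the recursive parametrization then confines two points $t,t'\in[0,s]$ with $|t-t'|\le\delta$ to a uniformly bounded number of consecutive level-$m$ pieces along the chain, whose union has diameter at most a small multiple of $2^{-m}$. The factor $2^6$ built into the threshold $2^{6-m}$ is exactly the slack that absorbs this multiplicative constant, giving $\w_f(\delta)\le\Omega(\delta)$ and completing the proof.
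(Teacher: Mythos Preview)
Your one-stage approach---build a hierarchy of refining chains and pass to a uniform limit---is \emph{not} what the paper does, and the step you yourself flag as ``the main obstacle'' is a genuine gap, not just bookkeeping. Once you assign each $F\in\F_{n+1}$ a unique parent in $\F_n$, the children of a fixed $F_{n,i}$ need not cover $F_{n,i}$ (a level-$(n{+}1)$ set may straddle two level-$n$ sets but is assigned to only one), so their union need not be connected and no sub-chain exists. Even granting sub-chains, linking the last child of $F_{n,i}$ to the first child of $F_{n,i+1}$ requires those two level-$(n{+}1)$ sets to meet, which nothing guarantees. Separately, a DFS of a spanning tree visits each vertex up to twice, so your level-$n$ chain has length up to $2S_X(2^{-n})-1$, not $S_X(2^{-n})$; the claim that the budget ``match[es] the hypothesis exactly'' is therefore off by a factor of $2$.

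The paper avoids all of this by splitting the problem in two. First (Lemma~\ref{l:gap}, a controlled Aleksandroff--Urysohn theorem) it builds a surjection $\varphi:D\to X$ from a Cantor-like closed set $D\subseteq[0,s]$ whose complementary gaps have lengths $\e_n=\sum_{m\ge n}S_X(2^{-m})\delta_m$, with the two endpoints of each gap mapping into a common small connected set; no chains are needed here, only a linear order on marked points, so the count $S_X(2^{-n})$ is used on the nose. Second (Lemma~\ref{l:path}) each gap is filled by a path with controlled modulus, built from minimal $(\{a\},\{b\})$-connectors inside the \emph{nested} fattened covers of Lemma~\ref{l:cover}; the nesting property $C=\bigcup\C_n(C)$ for $C\in\C_{n-1}$ is precisely what makes refinement work, at the cost of tripling diameters---this, together with the gluing in Lemma~\ref{l:sur}, is where the constant $2^6$ is actually spent. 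If you want to rescue your approach, you would at minimum need an analogue of Lemma~\ref{l:cover} to force the covers to nest.
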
 

 The proof of Theorem~\ref{t:main} will be presented in Section~\ref{s:main} after long preliminary work made in Sections~\ref{s:covers}--\ref{s:HM}.
 
 Now we discuss an application of Theorem~\ref{t:main} to fractal dimensions of metric Peano continua. One of such dimensions in the {\em $S$-dimension} 
 $$\SDim(X)\defeq\limsup_{\e\to0}\frac{\ln S_X(\e)}{\ln(1/\e)},$$
 which is a ``connected'' counterpart of the classical {\em box-counting dimension}
 $$\Dim(X)\defeq\limsup_{\e\to0}\frac{\ln N_X(\e)}{\ln(1/\e)}$$where $N_X(\e)$ is the smallest cardinality of a cover of $X$ by subsets of diameter $\le \e$. 
 
Observe that for every real number $r$ we have the implications
 $$\SDim(X)<r\;\;\Ra\;\;\limsup_{\e\to0}\e^rS_X(\e)<\infty\;\;\Ra\;\;\SDim(X)\le r.$$ 

By \cite{BT}, the $S$-dimension $\SDim(X)$ of any metric Peano continuum $X$ is bounded from above by its {\em H\"older dimension}
$$\HoDim(X)\defeq\inf\{\alpha\in (0,\infty]:\mbox{there is a surjective $\tfrac1\alpha$-H\"older function $f:[0,1]\to X$}\}.$$
More precisely, by \cite[Theorem~3.1]{BT}, for every metric Peano continuum $X$ we have the inequalities
$$\Dim(X)\le\SDim(X)\le\HoDim(X).$$
By \cite[Corollary 7.3]{BT}, the box-counting dimension $\Dim(X)$ of a metric Peano continuum can be strictly smaller than its $S$-dimension $\SDim(X)$. On the other hand, it is not known whether $\SDim(X)=\HoDim(X)$ for every metric Peano continuum $X$, see \cite[Problem 4.5]{BT}. Nonetheless, the following corollary of Theorem~\ref{t:main} shows that the gap between $\SDim(X)$ and $\HDim(X)$ is not too large.

\begin{corollary}\label{c:main} Each metric Peano continuum $X$ has $$\SDim(X)\le\HoDim(X)\le 2\cdot \SDim(X).$$
\end{corollary}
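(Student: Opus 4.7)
The first inequality $\SDim(X) \le \HoDim(X)$ is already recorded as \cite[Theorem~3.1]{BT}; only the bound $\HoDim(X) \le 2\,\SDim(X)$ requires proof. My plan is to apply Theorem~\ref{t:main} with a pure H\"older modulus $\Omega(t) = t^{1/\alpha}$ for each $\alpha$ slightly larger than $2\,\SDim(X)$, and to verify that the associated series $s$ converges.

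A preliminary reduction: rescaling the metric by a positive constant is bi-Lipschitz, and a direct check shows it preserves both $\SDim$ (the additive shift $-\ln\lambda$ in the denominator is killed by the $\limsup$) and $\HoDim$ (composition with a bi-Lipschitz map leaves H\"older exponents invariant). Hence I may assume $\diam(X) \le 1$. The case $\SDim(X) = \infty$ is trivial, so set $d \defeq \SDim(X) < \infty$, fix an arbitrary $\alpha > 2d$, and choose $\e > 0$ with $\alpha > 2(d+\e)$. Writing $\beta \defeq d+\e$, the definition of $\SDim$ furnishes a constant $C > 0$ with $S_X(2^{-m}) \le C \cdot 2^{m\beta}$ for every $m \ge 1$.

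For $\Omega(t) \defeq t^{1/\alpha}$ we have $\Omega^{-1}(y) = y^\alpha$ and $\Omega[\IR_+] = \IR_+ \supseteq (0,1]$. Since $\Omega^{-1}(\min\{1,2^{6-m}\}) \le 2^{6\alpha}\cdot 2^{-\alpha m}$, the inner sum in the definition of $s$ is bounded by
$$\sum_{m=n}^\infty S_X(2^{-m})\,\Omega^{-1}(\min\{1,2^{6-m}\}) \le C\cdot 2^{6\alpha}\sum_{m=n}^\infty 2^{m(\beta-\alpha)} = O\bigl(2^{-n(\alpha-\beta)}\bigr),$$
which is finite because $\alpha > \beta$. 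Feeding this into the outer sum gives
$$s \;\le\; O\Bigl(\sum_{n=1}^\infty 2^{n\beta}\cdot 2^{-n(\alpha-\beta)}\Bigr) \;=\; O\Bigl(\sum_{n=1}^\infty 2^{n(2\beta-\alpha)}\Bigr) \;<\; \infty,$$
since $\alpha > 2\beta$.

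Theorem~\ref{t:main} then supplies a continuous surjection $f \colon [0,s] \to X$ with $\w_f(t) \le t^{1/\alpha}$, so $f$ is $\tfrac1\alpha$-H\"older. Composing with the linear bijection $[0,1] \to [0,s]$ rescales the H\"older constant by $s^{1/\alpha}$ but preserves the exponent, yielding a surjective $\tfrac1\alpha$-H\"older map $[0,1]\to X$ and hence $\HoDim(X) \le \alpha$. Letting $\alpha \searrow 2d$ gives the bound. The only obstacle is the geometric-series bookkeeping above; the factor of $2$ in the conclusion is forced precisely by the nested double sum in Theorem~\ref{t:main}, which spends one copy of $S_X$ on the outer index and another on the inner, together demanding $\alpha > 2\beta$.
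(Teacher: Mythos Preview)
Your proof is correct and follows essentially the same route as the paper's: rescale so $\diam(X)\le 1$, fix $\alpha>2\,\SDim(X)$, pick an intermediate exponent (your $\beta$, the paper's $r$) strictly between $\SDim(X)$ and $\alpha/2$, bound $S_X(2^{-m})$ by $C\cdot 2^{m\beta}$, plug $\Omega(t)=t^{1/\alpha}$ into Theorem~\ref{t:main}, and sum the resulting double geometric series. The only cosmetic differences are that the paper evaluates the geometric sums explicitly rather than writing $O(\cdot)$, and that you spell out the trivial case $\SDim(X)=\infty$ and the invariance under rescaling, which the paper takes for granted.
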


\begin{proof} Multiplying the metric of $X$ by a suitable constant, we lose no generality assuming that $\diam(X)\le 1$. The inequality $\SDim(X)\le\HDim(X)$ was proved in \cite[Theorem 3.1]{BT}. The inequality $\HDim(X)\le 2{\cdot}\SDim(X)$ will follow as soon as we show that for every $\alpha>2{\cdot}\SDim(X)$ there exists a surjective $\frac1{\alpha}$-H\"older map $g:[0,1]\to X$. Choose any real number $r$ with $\SDim(X)<r<\alpha/2$. Since $\SDim(X)<r$, there exists a constant $C$ such that $S_X(\e)\le \frac{C}{\e^r}$ for all $\e\in(0,1]$. Consider the function $\Omega:\IR_+\to\IR_+$, $\Omega:t\mapsto t^{1/\alpha}$, and observe that
$$
\begin{aligned}
s&\defeq\sum_{n=1}^\infty S_X(2^{-n})\sum_{m=n}^\infty S_X(2^{-m})\Omega^{-1}(\min\{1,2^{6-m}\})\le \sum_{n=1}^\infty C2^{nr}\sum_{m=n}^\infty C2^{mr}2^{(6-m)\alpha}\\
&=\frac{2^{6\alpha}C^22^{2r-\alpha}}{(1-2^{r-\alpha})(1-2^{2r-\alpha})}<\infty.
\end{aligned}
$$
By Theorem~\ref{t:main}, there exists a surjective continuous map $f:[0,s]\to X$ such that $\w_f\le \Omega$ and hence $f$ is $\frac1\alpha$-H\"older. Then the function $g:[0,1]\to X$, $g:t\mapsto f(ts)$, is $\frac1\alpha$-H\"older,   witnessing that $\HDim(X)\le \alpha$.
\end{proof}

Given a Peano continuum $X$, let $\M_X$ be the set of all admissible metrics on $X$, i.e., metrics generating the topology of $X$. By the classical Pontrjagin--Schnirelman Theorem \cite{PS}, the number
$$\dim(X)\defeq\inf\{\Dim(X,d):d\in\M_X\}$$coincides with the topological dimension of $X$ and is integer or infinite. 

On the other hand, the topological invariants
$$\Sdim(X)\defeq\inf\{\SDim(X,d):d\in\M_X\}\quad\mbox{and}\quad\Hdim(X)\defeq\inf\{\HDim(X,d):d\in\M_X\},$$ 
are not necessarily integer: by \cite[Corollary 7.3]{BT}, for every $r\in[1,\infty]$ there exists a Peano continuum $X\subseteq\IR^2$  such that 
$$\Sdim(X)=\SDim(X)=\HoDim(X)=\Hdim(X)=r.$$
Applying Corollary~\ref{c:main} to topological  $S$-dimension $\Sdim(X)$ and the topological H\"older dimension $\Hdim(X)$, we obtain another corollary.

\begin{corollary}\label{c2} Each Peano continuum $X$ has $$\dim(X)\le \Sdim(X)\le\Hdim(X)\le 2\cdot\Sdim(X).$$
\end{corollary}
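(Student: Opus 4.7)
The proof is a direct application of Corollary~\ref{c:main} and the Pontrjagin--Schnirelman Theorem, combined with the elementary observation that $\inf$ commutes with scalar multiplication by a positive constant.

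The plan is as follows. For every admissible metric $d\in\M_X$ on the Peano continuum $X$, the metric space $(X,d)$ is itself a metric Peano continuum, so Corollary~\ref{c:main} (together with the inequality $\Dim\le\SDim$ from \cite[Theorem 3.1]{BT}) yields
$$\Dim(X,d)\le\SDim(X,d)\le\HoDim(X,d)\le 2{\cdot}\SDim(X,d).$$
Taking the infimum over $d\in\M_X$ on each of the first three quantities immediately gives
$$\inf_{d\in\M_X}\Dim(X,d)\le\inf_{d\in\M_X}\SDim(X,d)\le\inf_{d\in\M_X}\HoDim(X,d),$$
i.e.\ $\dim(X)\le\Sdim(X)\le\Hdim(X)$, where the Pontrjagin--Schnirelman Theorem is used to identify the leftmost infimum with the topological dimension $\dim(X)$. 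For the last inequality, the pointwise bound $\HoDim(X,d)\le 2\SDim(X,d)$ gives
$$\Hdim(X)=\inf_{d\in\M_X}\HoDim(X,d)\le\inf_{d\in\M_X}2{\cdot}\SDim(X,d)=2\inf_{d\in\M_X}\SDim(X,d)=2{\cdot}\Sdim(X),$$
which completes the chain.

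There is no real obstacle here: the work has already been done in Corollary~\ref{c:main}, and the only thing one must notice is that a pointwise inequality between the functions $d\mapsto\Dim(X,d)$, $d\mapsto\SDim(X,d)$, $d\mapsto\HoDim(X,d)$ on $\M_X$ passes to their infima, and that multiplying by a positive constant commutes with taking the infimum.
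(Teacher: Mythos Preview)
Your proof is correct and is exactly the approach the paper indicates: the paper simply states that Corollary~\ref{c2} is obtained by ``applying Corollary~\ref{c:main}'' to the topological $S$-dimension and the topological H\"older dimension, which amounts precisely to your argument of taking infima over $d\in\M_X$ in the pointwise chain $\Dim(X,d)\le\SDim(X,d)\le\HoDim(X,d)\le 2\,\SDim(X,d)$. One cosmetic remark: in the paper $\dim(X)$ is \emph{defined} as $\inf_{d\in\M_X}\Dim(X,d)$, so the Pontrjagin--Schnirelman Theorem is not needed to establish the inequality $\dim(X)\le\Sdim(X)$ itself---only to interpret $\dim(X)$ as the topological dimension.
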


Corollary~\ref{c2} provides a partial answer to  \cite[Problem 5.2]{BT}, asking whether $\Sdim(X)=\Hdim(X)$ for any Peano continuum $X$.

\begin{remark} Corollaries~\ref{c:main} and \ref{c2} are essentially applied in the paper \cite{BNS} estabilishing the existence of a fractal structure on a Peano continuum (more generally, finite unions of Peano continua) containing a free arc and having finite $S$-dimension.
\end{remark}

\section{Preliminaries}

In this section we collect notations that will be used in the remaining part of the paper. 

We denote by $\w$ the set $\{0,1,2,\dots\}$ of all natural numbers. Each number $n\in\w$ is idenitified with the set $\{0,\dots,n-1\}$ of smaller numbers, so $\w\setminus n=\{n,n+1,\dots\}$. Let $\IN\defeq\w\setminus\{0\}$ be the set of positive integers. For a set $X$, its cardinality is denoted by $|X|$.

For a function $f:X\to Y$ between sets and a subset $A\subseteq X$, let
$$f[A]\defeq\{f(a):a\in A\}$$ be the image of the set $A$ under the map $f$.

For a metric space $X$ by $d_X$ we shall denote its metric. For a point $x\in X$ and positive real $\e>0$, let $$O(x;\e)\defeq\{y\in X:d_X(x,y)<\e\}\quad\mbox{and}\quad O[x;\e]\defeq \{y\in X:d_X(x,y)\le\e\}$$ be the {\em open $\e$-ball} and the {\em closed $\e$-ball} centered at $x$, respectively. For a subset $A\subseteq X$ let $O(A;\e)=\bigcup_{a\in A}O(a;\e)$ be the open $\e$-neighborhood around $A$, and $\overline{O(A;\e)}$ be its closure in $X$. Let also $O[A;\e]=\bigcup_{a\in A}O[a;\e]$ and $\overline {O[A;\e]}$ be the closure of $O[A;\e]$ in $X$. It is clear that $O(A;\e)\subseteq\overline{O(A;\e)}\subseteq \overline{O[A;\e]}$.
\smallskip

For a subset $A\subseteq X$ let $$\diam(A)\defeq\sup(\{d_X(x,y):x,y\in A\}\cup\{0\})$$ be the {\em diameter} of $A$ in the metric space $X$. It is easy to see that $\diam(\overline{O[A;\e]})\le\diam(A)+2\e$.

For a family $\F$ of subsets of a metric space $X$, let 
$$\mesh(\F)\defeq\sup\{\diam(F):F\in\F\}$$ be the {\em mesh} of $\F$. For a set $A\subseteq X$ and a family $\C$ of subsets of $X$, we put 
$$\C(A)\defeq\{C\in\C:\emptyset\ne C\subseteq A\}\quad\mbox{and}\quad
\C[A]\defeq\{C\in\C:C\cap A\ne\emptyset\}.$$
It is clear that $\C(A)\subseteq\C[A]$.

By an {\em ordered set} we understand a set $X$ endowed with an {\em order} $\le$  (which is a reflexive transitive antisymmetric relation on $X$). An order $\le$ on $X$ is {\em linear} if for any distinct elements $x,y\in X$ we have $x\le y$ or $y\le x$. Given two elements $x,y$ of an ordered set $X$, we write $x<y$ if $x\le y$ and $x\ne y$.

A function $f:X\to Y$ between two ordered sets $(X,\le_X)$ and $(Y,\le_Y)$ is defined to be {\em order-preserving} if for any elements $x,x'\in X$ with $x\le_X x'$  we have $f(x)\le_Y f(x')$.   A function $f$ between two ordered sets is called {\em increasing} if it is injective and order-preserving. 

A sequence $(\delta_n)_{n\in\w}$ of real numbers will be called ({\em strictly}) {\em decreasing} if for any numbers $n<m$ we have $\delta_m\le\delta_n$ (resp. $\delta_m<\delta_n$).

\section{Nested sequences of covers of metric Peano continua}\label{s:covers}

In this section we prove a lemma showing that each metric Peano continuum possesses a nice ``nested" sequence of covers by closed connected sets. Such sequences will be used in the proof of Lemma~\ref{l:path}.

\begin{lemma}\label{l:cover} For every metric Peano continuum $X$ of diameter $\le 1$ there exists a sequence $(\C_n)_{n\in\w}$ of covers of $X$ by closed connected subsets such that $\C_0=\{X\}$ and for every $n\in\IN$ the following conditions are satisfied:
\begin{enumerate}
\item $|\C_n|\le S_X(\frac1{2^n})$;
\item $\mesh(\C_n)\le \frac3{2^n}$;
\item $C=\bigcup\C_n(C)$ for every $C\in\C_{n-1}$.
\end{enumerate}
\end{lemma}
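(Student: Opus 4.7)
The proof proceeds by induction on $n$, with the base case $\C_0 \defeq \{X\}$ being immediate (conditions (1)--(3) are imposed only for $n \ge 1$). For the inductive step, assume $\C_{n-1}$ has been built. I would start by invoking Sierpi\'nski's characterization of property~$S$ to obtain a cover $\mathcal{E}$ of $X$ by connected subsets of diameter $\le 1/2^n$ with $|\mathcal{E}|=S_X(1/2^n)$, then replace each member by its closure (preserving both connectedness and the diameter bound).

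For each $E \in \mathcal{E}$ I would then assign a ``parent'' $C(E) \in \C_{n-1}$ with $E \cap C(E) \neq \emptyset$ --- possible because $\C_{n-1}$ covers $X$ --- and define $D(E)$ to be the connected component of $C(E) \cap \overline{O[E; 1/2^n]}$ containing a chosen point of $E \cap C(E)$. Using the elementary inequality $\diam(\overline{O[E;\e]}) \le \diam(E) + 2\e$, I get $\diam(D(E)) \le 3/2^n$. Setting $\C_n \defeq \{D(E) : E \in \mathcal{E}\}$ (discarding empties), condition (2) holds automatically, condition (1) reduces to $|\C_n| \le |\mathcal{E}| = S_X(1/2^n)$, and every $D(E)$ is a closed connected subset of some $C(E) \in \C_{n-1}$.

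The delicate part is the covering half of (3): every $C \in \C_{n-1}$ must satisfy $C \subseteq \bigcup \C_n(C)$, that is, every $x \in C$ must lie in some $D(E)$ with $C(E) = C$. A priori this can fail whenever $x$ belongs to two distinct elements $C_1, C_2 \in \C_{n-1}$ but every $E \ni x$ happens to have been assigned parent $C_2$, leaving $x$ uncovered in $\C_n(C_1)$. I expect the main technical work of the lemma to lie in circumventing this obstruction, most likely by choosing the parent assignments carefully --- so that for each pair $(x, C)$ with $x \in C \in \C_{n-1}$ some $E \ni x$ is in fact assigned to $C$ --- or by allowing each $E$ to contribute one element of $\C_n$ per $C \in \C_{n-1}$ that it meets, while arguing that the total count still respects $S_X(1/2^n)$. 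The room for such an argument comes from the fact that $S_X(1/2^n)$ counts a cover at the finer scale $1/2^n$, whereas $\C_n$ only needs mesh $3/2^n$.
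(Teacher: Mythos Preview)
You have correctly located the obstruction, but neither of your two proposed fixes can work, and the inductive scheme cannot be salvaged. If each $E\in\mathcal E$ is assigned a single parent $C(E)$, then for a point $x$ lying in $C_1\cap C_2\in\C_{n-1}$ with only one $E\ni x$, one of $C_1,C_2$ is necessarily uncovered at $x$; no cleverness in the assignment avoids this, since the problem is combinatorial (one $E$, two required parents). If instead you let each $E$ spawn one set per parent it meets, the cardinality of $\C_n$ becomes of order $|\mathcal E|\cdot|\C_{n-1}|$, which need not be $\le S_X(2^{-n})$. Your remark that $\C_n$ only needs mesh $3/2^n$ rather than $1/2^n$ does not help here: a larger allowed mesh means \emph{fewer} sets may suffice, not that more are permitted.

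The paper abandons the inductive scheme entirely. It fixes once and for all covers $\F_n$ with $|\F_n|=S_X(2^{-n})$, $\mesh(\F_n)\le 2^{-n}$, and defines $\C_n\defeq\{\overline{\St_{n+1,\w}[F]}:F\in\F_n\}$, where $\St_{n,\w}[A]$ is the ``infinite star'' obtained by repeatedly thickening $A$ by $\F_n,\F_{n+1},\F_{n+2},\dots$ in turn. The mesh bound $3/2^n$ then comes from the geometric series $\sum_{m\ge n+1}2\cdot 2^{-m}=2^{1-n}$ (this is where the factor $3$ is actually spent). The nesting condition~(3) follows from an algebraic identity $\St_{n,\w}[A]=\St_{n+1,\w}[\St_n[A]]$: each $C=\overline{\St_{n,\w}[F_{n-1}]}\in\C_{n-1}$ is shown to be the union of those $\overline{\St_{n+1,\w}[F_n]}$ with $\St_{n+1,\w}[F_n]\subseteq\St_{n,\w}[F_{n-1}]$, and these are elements of $\C_n$ contained in $C$. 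The point is that $\C_n$ is \emph{not} built from $\C_{n-1}$; both are built from the common sequence $(\F_k)_{k\in\w}$, and the infinite-star construction makes the refinement automatic without any parent assignment.
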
 

\begin{proof} For every $n\in\w$ the definition of the number $S_X(2^{-n})$ yields a finite cover $\F_n$ of $X$ by  connected subsets such that $|\F_n|=S_X(2^{-n})$ and $\mesh(\F_n)\le 2^{-n}$. Since $\diam\,X\le 1$, $S_X(2^0)=1$ and hence $\F_0=\{X\}$.

For every set $A\subseteq X$ and number $n\in\w$, let $\St_{n}[A]\defeq\bigcup\F_n[A]$, where $\F_n[A]\defeq\{F\in\F_n:F\cap A\ne\emptyset\}$. Since $\F_n$ is a cover of $X$, $A\subseteq \St_n[A].$ 

It is clear that 
\begin{equation}\label{eq:bound1}
\diam(\St_n[A])\le \diam(A)+2\,\mesh(\F_n)\le\diam(A)+\tfrac2{2^n}.
\end{equation}

\begin{claim}\label{cl:connected} For any $n\in\w$ and connected subspace $A\subseteq X$, the subspace $\St_n[A]$ of $X$ is connected.
\end{claim}

\begin{proof} This claim follows from \cite[Theorem 6.1.9]{E} and the observation that $$\textstyle\St_n[A]=A\cup\bigcup\{F\in\F_n:A\cap F\ne\emptyset\}$$ is the union of connected subspaces that intersect the connected space $A$.
\end{proof}

Let $\St_{n,n}[A]\defeq\St_n[A]$ and $\St_{n,m}[A]\defeq\St_m[\St_{n,m-1}[A]]$ for $m>n$. Let also $$\St_{n,\w}[A]\defeq\bigcup_{m=n}^\infty\St_{n,m}[A].$$

\begin{claim}\label{cl:connected2} For every numbers $n\le m$ and connected subspace $A\subseteq X$ the subspaces $\St_{n,m}[A]$, $\St_{n,\w}[A]$ and $\overline{\St_{n,\w}[A]}$ of $X$ are connected.
\end{claim}

\begin{proof} This claim will be proved by induction on $m\ge n$. For $m=n$ the space $\St_{n,m}[A]=\St_n[A]$ is connected by Claim~\ref{cl:connected}. Assume that for some $m\ge n$ we have proved that the subspace $\St_{n,m}[A]$ of $X$ is connected. Applying Claim~\ref{cl:connected}, we conclude that the subspace $\St_{n,m+1}[A]=\St_{m+1}[\St_{n,m}[A]]$ of $X$ is connected. The space $\St_{n,\w}[A]=\bigcup_{m=n}^\infty\St_{n,m}[A]$ is connected, being the union of an increasing sequence of connected spaces, see \cite[Corollary~6.1.10]{E}. By \cite[Corollary 6.1.11]{E}, the closure $\overline{\St_{n,\w}[A]}$ of the connected subset $\St_{n,\w}[A]$ in $X$ is connected.
\end{proof}

The upper bound (\ref{eq:bound1}) implies the upper bound 
\begin{equation}\label{eq:bound2}
\diam(\St_{n,\w}[A])\le\diam(A)+2\sum_{m=n}^\infty\mesh(\F_m)\le\diam(A)+2\sum_{m=n}^\infty\tfrac1{2^m}=\diam(A)+\tfrac4{2^n}.
\end{equation}

For every $n\in\w$, consider the closed cover $\C_n\defeq\{\overline{\St_{n+1,\w}[F]}:F\in\F_n\}$. By Claim~\ref{cl:connected2}, the family $\C_n$ consists of closed connected subspaces of $X$. It follows that $|\C_n|\le|\F_n|=S_X(2^{-n})$ and $$\mesh(\C_n)\le\mesh(\F_n)+\tfrac4{2^{n+1}}\le\tfrac1{2^n}+\tfrac2{2^n}=\tfrac3{2^n}.$$
This shows that the family $\C_n$ has the properties (1), (2) of Lemma~\ref{l:cover}.
The property (3) will be derived from the following claims.

\begin{claim}\label{cl:st-mono} For any numbers $n\le m\le k$ and sets $A\subseteq B\subseteq X$ we have $$\St_{n,m}[A]\subseteq \St_{n,m}[B]\subseteq \St_{n,k}[B]\quad\mbox{and}\quad \St_{n,\w}[A]\subseteq\St_{n,\w}[B]=\bigcup_{i=m}^\infty\St_{n,i}[B].$$
\end{claim}

\begin{proof} The inclusion $\St_{n,m}[A]\subseteq\St_{n,m}[B]$ will be proved by induction on $m\ge n$. For $m=n$ we have $\F_n[A]\subseteq\F_n[B]$ and hence $$\textstyle\St_{n,n}[A]=\bigcup\F_n[A]\subseteq \bigcup\F_n[B]=\St_{n,n}[B].$$
Assume that for some $m\ge n$ we have proved that $\St_{n,m}[A]\subseteq\St_{n,m}[B]$. Then $$\St_{n,m+1}[A]=\St_{m+1}[\St_{n,m}[A]]\subseteq \St_{m+1}[\St_{n,m}[B]]=\St_{n,m+1}[B],$$ which completes the inductive step.

The inclusion $\St_{n,m}[B]\subseteq\St_{n,k}[B]$ can be proved by induction on $k\ge m$. For $k=m$ this inclusion is trivial. Assuming that $\St_{n,m}[B]\subseteq\St_{n,k}[B]$ for some $k\ge m$, we conclude that $$\St_{n,m}[B]\subseteq\St_{n,k}[B]\subseteq\St_{k+1}[\St_{n,k}[B]]=\St_{n,k+1}[B].$$

It follows that $\St_{n,\w}[A]=\bigcup_{i=n}^\infty\St_{n,i}[A]\subseteq\bigcup_{i=n}^\infty\St_{n,i}[B]=\St_{n,\w}[B]=\bigcup_{i=m}^\infty\St_{n,i}[B]$ because $\bigcup_{i=n}^m\St_{n,i}[B]=\St_{n,m}[B]$.
\end{proof}

\begin{claim}\label{cl:st-algebra} For every numbers $n<m\le k$ and every set $A\subseteq X$, we have $$\St_{n,k}[A]=\St_{m,k}[\St_{n,m-1}[A]]\quad\mbox{and}\quad\St_{n,\w}[A]=\St_{m,\w}[\St_{n,m-1}[A]].$$
\end{claim}

\begin{proof} The equality $\St_{n,k}[A]=\St_{m,k}[\St_{n,m-1}[A]]$ will be proved by induction on $k\ge m$. For $k=m$ we have $\St_{n,k}[A]=\St_{n,m}[A]=\St_{m}[\St_{n,m-1}[A]]=\St_{m,m}[\St_{n,m-1}[A]]=\St_{m,k}[\St_{n,m-1}[A]]$ by the definition of the set $\St_{n,m}[A]$. Assume that for some $k\ge m$ we have proved that  $\St_{n,k}[A]=\St_{m,k}[\St_{n,m-1}[A]]$. Then $$\St_{n,k+1}[A]=\St_{k+1}[\St_{n,k}[A]]=\St_{k+1}[\St_{m,k}[\St_{n,m-1}[A]]]=\St_{m,k+1}[\St_{n,m-1}[A]].$$ This completes the inductive step and also the proof of the equality $\St_{n,k}[A]=\St_{m,k}[\St_{n,m-1}[A]]$ for all $k\ge m$.

These equalities imply $$\St_{n,\w}[A]=\bigcup_{k=n}^\infty\St_{n,k}[A]=\bigcup_{k=m}^\infty\St_{n,k}[A]=\bigcup_{k=m}^\infty\St_{m,k}[\St_{n,m-1}[A]]=\St_{m,\w}[\St_{n,m-1}[A]].$$
\end{proof}

\begin{claim}\label{cl:ind} For every positive numbers $n<m$, every set $F_{n-1}\in\F_{n-1}$ and point $x_m\in \St_{n,m}[F_{n-1}]$ there exists a set $F_n\in\F_n$ such that $x_m\in \St_{n+1,m}[F_{n}]\subseteq \St_{n,m}[F_{n-1}]$.
\end{claim}

\begin{proof} This claim will be proved by induction on $m$. For $m=n+1$ we have $$\St_{n,m}[F_{n-1}]=\St_{n,n+1}[F_{n-1}]=\St_{n+1}[\St_{n,n}[F_{n-1}]]=\St_{n+1}[\St_n[F_{n-1}]]$$ and hence for any point $x_{m}\in\St_{n,m}[F_{n-1}]$ there exist sets $F_{n+1}\in\F_{n+1}$ and $F_n\in\F_n$ such that $x_{m}\in F_{n+1}$ and  $F_{n+1}\cap F_n\ne\emptyset\ne F_n\cap F_{n-1}$. It follows from $F_n\subseteq \St_{n}[F_{n-1}]$ that  $$\St_{n+1,m}[F_n]=\St_{n+1}[F_n]\subseteq\St_{n+1}[\St_n[F_{n-1}]]=\St_{n,n+1}[F_{n-1}]=\St_{n,m}[F_{n-1}]$$ and hence $x_m\in F_{n+1}\subseteq \St_{n+1}[F_n]=\St_{n+1,m}[F_n]\subseteq \St_{n,m}[F_{n-1}]$, so the set $F_n$ has the desired property.

Now assume that for some $m>n$ and every $x_m\in\St_{n,m}[F_{n-1}]$ we can find a set $F_n\in\F_n$ such that  $x_m\in \St_{n+1,m}[F_{n}]\subseteq \St_{n,m}[F_{n-1}]$. Given any point $x_{m+1}\in\St_{n,m+1}[F_{n-1}]=\St_{m+1}[\St_{n,m}[F_{n-1}]]$, find a set $F_{m+1}\in\F_{m+1}$ such that $x_{m+1}\in F_{m+1}$ and the intersection $F_{m+1}\cap\St_{n,m}[F_{n-1}]$ contains some point $x_m$. By the inductive assumption, there exists $F_n\in\F_n$ such that $x_m\in\St_{n+1,m}[F_{n}]\subseteq\St_{n,m}[F_{n-1}]$. By Claim~\ref{cl:st-mono}, 
$$\St_{n+1,m+1}[F_n]=\St_{m+1}[\St_{n+1,m}[F_{n}]]\subseteq \St_{m+1}[\St_{n,m}[F_{n-1}]]=\St_{n,m+1}[F_{n-1}]$$ and hence $$x_{m+1}\in F_{m+1}\subseteq\St_{m+1}[\{x_m\}]\subseteq\St_{m+1}[\St_{n+1,m}[F_n]]= \St_{n+1,m+1}[F_n]\subseteq\St_{n,m+1}[F_{n-1}].$$ This completes the inductive step and also the inductive proof of Claim~\ref{cl:ind}.
\end{proof}

\begin{claim}\label{cl:st-union} For every number $n\in\IN$, set $F_{n-1}\in\F_{n-1}$ and point $x\in \St_{n,\w}[F_{n-1}]$, there exists a set $F_n\in\F_n$ such that $x\in \St_{n+1,\w}[F_n]\subseteq \St_{n,\w}[F_{n-1}]$. 
\end{claim}

\begin{proof} Since $\St_{n,\w}[F_{n-1}]=\bigcup_{m=n+1}^\infty\St_{n,m}[F_{n-1}]$, there exists a number $m>n$ such that $x\in\St_{n,m}[F_{n-1}]$. By Claim~\ref{cl:ind}, there exists a set $F_n\in\F_n$ such that $x\in\St_{n+1,m}[F_n]\subseteq \St_{n,m}[F_{n-1}]$. Claims~\ref{cl:st-mono} and \ref{cl:st-algebra} ensure that 
$$x\in \St_{n+1,m}[F_n]\subseteq \St_{n+1,\w}[F_n]=\St_{m+1,\w}[\St_{n+1,m}[F_n]]\subseteq \St_{m+1,\w}[\St_{n,m}[F_{n-1}]]=\St_{n,\w}[F_{n-1}].$$
\end{proof}

Claim~\ref{cl:st-union} implies that for every $n\in\IN$ and every set $F_{n-1}\in\F_{n-1}$, the set $\St_{n,\w}[F_{n-1}]$ is the union of the {\em finite} family $\F\defeq\{\St_{n+1,\w}[F]:F\in\F_n,\;\St_{n+1,\w}[F]\subseteq\St_{n,\w}[F_{n-1}]\}$. Since $\F\subseteq\{\St_{n+1,\w}[F]:F\in\F_n,\;\overline{\St_{n+1,\w}[F]}\subseteq\overline{\St_{n,\w}[F_{n-1}]}\}$, we obtain the inclusions 
$$
\begin{aligned}
\overline{\St_{n,\w}[F_{n-1}]}&=\bigcup\{\overline{\St_{n+1,\w}[F]}:F\in\F_n,\;\St_{n+1,\w}[F]\subseteq\St_{n,\w}[F_{n-1}]\}\subseteq\\
&\subseteq\bigcup\{\overline{\St_{n+1,\w}[F]}:F\in\F_n,\;\overline{\St_{n+1,\w}[F]}\subseteq\overline{\St_{n,\w}[F_{n-1}]}\}=\\
&=\bigcup\{C\in\C_n:C\subseteq\overline{\St_{n,\w}[F_{n-1}]}\}\subseteq
\overline{\St_{n,\w}[F_{n-1}]},
\end{aligned}
$$
establishing the property (3) of the sequence $(\C_n)_{n\in\w}$ in Lemma~\ref{l:cover}.
\end{proof}

\section{Minimal connectors and their orderings}

Let $A,B$ be two disjoint  nonempty subsets of a topological space $X$. 

\begin{definition} A family $\C$ of connected subsets of $X$ is called 
\begin{itemize}
\item an {\em $(A,B)$-connector} if the union $\bigcup\C$ is connected and $A\cap\bigcup\C\ne\emptyset\ne B\cap\bigcup\C$;
\item a {\em minimal $(A,B)$-connector} if every $(A,B)$-connector $\C'\subseteq \C$ coincides with $\C$.
\end{itemize}
\end{definition}

It is clear that every finite $(A,B)$-connector contains a minimal $(A,B)$-connector.

Next, we define an ordered counterpart of a minimal $(A,B)$-connector. 

\begin{definition}\label{d:chain} A finite sequence $(C_1,\dots,C_n)$ of subsets of $X$ is called an {\em $(A,B)$-chain} if for every $i,j\in\{1,\dots,n\}$ the following conditions are satisfied:
\begin{enumerate}
\item $A\cap C_i\ne\emptyset$ if and only if $i=1$;
\item $B\cap C_j\ne\emptyset$ if and only if $j=n$;
\item $C_i\cap C_j\ne\emptyset$ if and only if $|i-j|\le 1$.
\end{enumerate}
\end{definition}

\begin{lemma}\label{l:connector} Let $A,B$ be disjoint nonempty sets in a topological space $X$. A finite family $\C$ of closed connected subsets of $X$ is a minimal $(A,B)$-connector $\C$ if and only if $\C=\{C_1,\dots,C_n\}$ for a unique $(A,B)$-chain $(C_1,\dots,C_n)$.
\end{lemma}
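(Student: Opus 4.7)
The plan is to prove both implications separately and then derive uniqueness. A key preliminary remark is that, since every $C\in\C$ is closed and connected and $\C$ is finite, the union $\bigcup\C$ is connected if and only if the \emph{intersection graph} $G(\C)$ (with vertex set $\C$ and an edge between $C,C'$ whenever $C\cap C'\ne\emptyset$) is connected: a disconnection of the graph partitions $\C$ into two subfamilies whose unions are disjoint closed nonempty sets, and vice versa. I will use this criterion throughout.

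For the implication ``chain $\Ra$ minimal connector'', assume $(C_1,\dots,C_n)$ is an $(A,B)$-chain. Condition (3) gives $C_i\cap C_{i+1}\ne\emptyset$, making $G(\C)$ connected, so $\bigcup\C$ is connected; conditions (1)--(2) supply the required meetings with $A$ and $B$. For minimality, take $\C'\subsetneq\C$ and set $I\defeq\{i:C_i\in\C'\}$. If $1\notin I$, condition (1) forces $A\cap\bigcup\C'=\emptyset$; symmetrically if $n\notin I$. Otherwise some $k\in\{2,\dots,n-1\}$ lies outside $I$, and condition (3) makes $\bigcup\{C_i:i\in I,\,i<k\}$ and $\bigcup\{C_i:i\in I,\,i>k\}$ disjoint closed nonempty sets with union $\bigcup\C'$, so $\bigcup\C'$ is disconnected.

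For the converse, assume $\C$ is a minimal $(A,B)$-connector. The preliminary criterion gives connectedness of $G(\C)$. Pick $D_0,D_k\in\C$ meeting $A$ and $B$ respectively, and let $D_0,D_1,\dots,D_k$ be a shortest path between them in $G(\C)$; being shortest, its vertices are pairwise distinct. The subfamily $\{D_0,\dots,D_k\}$ is itself an $(A,B)$-connector contained in $\C$, so minimality forces $\C=\{D_0,\dots,D_k\}$ and $|\C|=k+1$. Renumber $C_i\defeq D_{i-1}$, so that $C_i\cap C_{i+1}\ne\emptyset$. Minimality is then invoked three times to rule out extra meetings: a meeting $C_k\cap A\ne\emptyset$ with $k>1$ would make $\{C_k,\dots,C_n\}$ a proper subconnector; symmetrically for $C_k\cap B\ne\emptyset$ with $k<n$; and any ``chord'' $C_i\cap C_j\ne\emptyset$ with $j\ge i+2$ would let the proper subfamily $\{C_1,\dots,C_i,C_j,\dots,C_n\}$ bypass $C_{i+1},\dots,C_{j-1}$ while still forming an $(A,B)$-connector. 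I expect the chord case to be the main bookkeeping step, since it is where the graph criterion and the hypothesis that the $C_i$ are closed do the essential work; the other two are almost immediate.

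For uniqueness, if $(C_1,\dots,C_n)$ and $(C'_1,\dots,C'_n)$ both enumerate $\C$ as $(A,B)$-chains, condition (3) (applied to either chain) identifies $G(\C)$ with the path graph on the vertices; a path graph admits only two linear orderings compatible with its edges, namely the given one and its reversal. Reversal would put $C'_1=C_n$, but $C'_1\cap A\ne\emptyset$ while condition (1) for the unreversed chain forces $C_n\cap A=\emptyset$ whenever $n\ge 2$; the case $n=1$ is trivial. Hence the ordering is unique.
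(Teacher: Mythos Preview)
Your proof is correct and takes a genuinely different route from the paper for the ``only if'' direction. The paper argues by building up the family $\C'$ of sets that are terminal members of some $(A,\{x\})$-chain starting from $A$, shows inductively that $\C'_A\defeq\C(A)\cup\C'=\C$, and then extracts an $(A,B)$-chain from this. Your approach via the intersection graph $G(\C)$ is considerably cleaner: once you know $G(\C)$ is connected, a shortest path from a vertex meeting $A$ to one meeting $B$ immediately furnishes a candidate chain, and minimality forces it to exhaust $\C$. Note, incidentally, that the chord case in your condition~(3) verification already follows from the shortest-path property (shortest paths have no chords), so you do not even need minimality there; only conditions~(1) and~(2) genuinely require it. What the paper's more hands-on construction buys is that it avoids importing the graph-theoretic language, staying closer to the topological definitions; what your approach buys is brevity and conceptual transparency, since the combinatorial content of ``minimal connector'' is exactly ``spanning path in the intersection graph''.

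Your uniqueness argument via the two orderings of a path graph is also different from (and slicker than) the paper's inductive identification $C_1=C_1'$, $C_2=C_2'$, \dots, but both are straightforward. One small implicit point worth making explicit in the ``if'' direction: the chain conditions force the $C_i$ to be pairwise distinct (otherwise condition~(3), or condition~(1) when $n=2$, is violated), which is what makes your index set $I=\{i:C_i\in\C'\}$ well-defined.
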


\begin{proof} To prove the ``if'' part, assume that $\C=\{C_1,\dots,C_n\}$ for some $(A,B)$-chain $(C_1,\dots,C_n)$, satisfying the conditions (1)--(3) of Definition~\ref{d:chain}. The connectedness of the sets $C_i\in\C$ and the condition (3)  imply that the union $\bigcup_{i=1}^nC_i=\bigcup\C$ is connected. The conditions (1) and (2)  imply that $\C$ is an $(A,B)$-connector. Assuming that the $(A,B)$-connector $\C$ is not minimal, we can find an $(A,B)$-connector $\C'\subset \C$ and a number $k\in\{1,\dots,n\}$ such that $C_k\notin \C'$. The conditions (1) and (2) ensure that $1<k<n$ and $C_1,C_n\in\C'$. The condition (3) implies that $(\bigcup\C')\cap\bigcup_{i=1}^{k-1}C_i$ and $(\bigcup\C')\cap\bigcup_{i=k+1}^nC_i$ are two disjoint closed nonempty sets in $\bigcup\C'$, which contradicts the connectedness of $\bigcup\C'$. This contradiction shows that $\C$ is a minimal $(A,B)$-connector.
\smallskip

To prove the ``only if'' part, assume that $\C$ is a minimal $(A,B)$-connector. By the minimality of $\C$, each set $C\in\C$ is not empty, moreover, $C\not\subseteq \bigcup(\C\setminus\{C\})$. Let $\C(A)\defeq\{C\in\C:C\subseteq A\}$ and let $\C'\subseteq\C$ be the family of all sets $C\in\C$ for which there exist $x\in X\setminus A$, $k\in\IN$, and an $(A,\{x\})$-chain $(C_1,\dots,C_k)\in\C^k$ such that $C=C_k$. Since the family $\C$ is finite and consists of closed subsets of $X$, the sets $\bigcup\C(A)$ and $\bigcup\C'$ are closed in $\bigcup\C$. 

We claim that the family $\C'_A\defeq\C(A)\cup\C'$ is not empty. Since $A\cap\bigcup\C\ne\emptyset$, there exists a set $C\in\C$ such that $A\cap C\ne\emptyset$. If $C\subseteq A$, then $C\in\C(A)\subseteq\C'_A$. If $C\not\subseteq A$, then there exists a point $x\in C\setminus A$ and $(C)$ is an $(A,\{x\})$-chain witnessing that $C\in\C'\subseteq\C'_A$.

Next, we show that $\C'_A=\C$. Assuming that $\C'_A\ne\C$,  consider the nonempty family $\C''=\C\setminus\C'_A$ and observe that its union $\bigcup\C''$ is a closed nonempty subset of $\bigcup\C=(\bigcup\C'_A)\cup(\bigcup\C'')$. The connectedness of $\bigcup\C$ implies that $(\bigcup\C'_A)\cap(\bigcap\C'')\ne\emptyset$ and hence there exist  set $C'\in\C'_A$ and $C''\in\C''$ such that $C'\cap C''\ne\emptyset$. If $C''\cap A\ne\emptyset$, then $(C'')$ is an $(A,\{x\})$-chain for any $x\in C''\setminus A$ (which exists as $C''\in\C''\subseteq\C\setminus\C(A)$) and hence $C''\in\C'$, which contradicts the choice of $C''\in\C''\subseteq\C\setminus\C'$. So, $C''\cap A=\emptyset\ne C'\cap C''$ and hence $C'\in\C'_A\setminus\C(A)\subseteq \C'$. Then there exist $x\in X\setminus A$, $k\in\IN$, and an $(A,\{x\})$-chain $(C_1,\dots,C_k)\in\C^k$ such that $C_k=C'$. Let $j\in\{1,\dots,k\}$ be the smallest number such that $C_j\cap C''\ne\emptyset$.

We claim that $C_j\ne C''$. Assuming that $C_j=C''$ and taking into account that $C''\cap A=\emptyset\ne C_1\cap A$, we conclude that $j\ne 1$ and hence the number $j-1$ is well-defined. Since $\emptyset\ne C_{j-1}\cap C_j=C_{j-1}\cap C''$, we obtain a contradiction with the choice of the number $j$. Therefore, $C_j\ne C''$ and the minimality of the $(A,B)$-connector $\C$ ensures that $C''\not\subseteq C_j$, so we can choose a point $x\in C''\setminus C_j=C''\setminus\bigcup_{i=1}^jC_i$ and conclude that $(C_1,\dots,C_j,C'')$ is an $(A,\{x\})$-chain, witnessing that $C''\in\C'$, which contradicts the choice of $C''\in\C\setminus\C'$. This contradiction shows that $\C'_A=\C$. 

Since $B\cap\bigcup\C\ne\emptyset$, there exists a set $C\in\C$ such that $C\cap B\ne\emptyset$. Then $C\in\C\setminus\C(A)=\C_A'\setminus\C(A)\subseteq\C'$ and hence there exist $x\in X\setminus A$, $k\in\IN$, and an $(A,\{x\})$-chain $(C_1,\dots,C_k)$ such that $C_k=C$. Let $j\in\{1,\dots,k\}$ be the smallest number such that $C_j\cap B\ne\emptyset$. Then $(C_1,\dots,C_j)$ is an $(A,B)$-chain and $\{C_1,\dots,C_j\}=\C$ by the minimality of the $(A,B)$-connector $\C$. 

It remains to prove that an $(A,B)$-chain $(C_1,\dots,C_n)$ with $\{C_1,\dots,C_n\}=\C$ is unique. Assume that $(C_1',\dots,C_m')$ is another $(A,B)$-chain such that $\C=\{C_1',\dots,C_m'\}$. The definition of an $(A,B)$-chain ensures that it consists of pairwise distinct sets, which implies that $n=|\C|=m$. The definition of an $(A,B)$-chain ensures that $C_1=C_1'$ is the unique set in the family $\C$ that intersects $A$. The set $C_2=C_2'$ is a unique set in $\C\setminus\{C_1\}$ that intersects the set $C_1$. Assume that for some $k\in\{2,\dots,n-1\}$ we have proved that $C_i=C_i'$ for all $i\le k$. The definition of an $(A,B)$-chain ensures that $C_{k+1}=C_{k+1}'$ is a unique set in $\C$ that intersects $C_k$ and is distinct from $C_{k-1}=C_{k-1}'$. Therefore, $C_i=C_i'$ for all $i\in\{1,\dots,n\}$, and the $(A,B)$-chain $(C_1,\dots,C_n)$ representing the minimal $(A,B)$-connector $\C$ is unique.
\end{proof}

Lemma~\ref{l:connector} implies that every finite minimal $(A,B)$-connector $\C$  carries a unique linear order $\preceq$ such that for any sets $C\preceq C'$ in $\C$ the following conditions hold:
\begin{itemize}
\item[(i)] $C\cap A\ne\emptyset$ if and only if $C$ is the smallest element of the linearly ordered set $(\C,\preceq)$;
\item[(ii)] $C\cap B\ne \emptyset$ if and only if $C$ is the largest element of the linearly ordered set $(\C,\preceq)$; 
\item[(iii)] $C\cap C'\ne\emptyset$ if and only if $\{M\in\C:C\preceq M\preceq C'\}=\{C,C'\}$.
\end{itemize}
The linear order $\preceq$ will be called the {\em canonical linear order} on the minimal $(A,B)$-connector $\C$.

\section{Paths with controlled continuity modulus in metric Peano continua}

In this section we prove some results on paths with controlled continuity modulus in metric Peano continua. The key result here is the following technical lemma, which is one of two ingredients of the proof of the controlled Hahn--Mazurkiewicz Theorem~\ref{t:main}.

\begin{lemma}\label{l:path} Let $(\delta_n)_{n\in\w}$ be a decreasing sequence of positive real numbers and $X$ be a compact connected metric space of diameter $\le 1$ such that $\sum_{n=0}^\infty S_X(\tfrac1{2^{n}})\delta_n<\infty$. For every connected subset $F\subseteq X$, points $a,b\in F$ and number $m\in\w$, there exist a positive real number $s\le \sum_{n=m}^\infty S_X(\tfrac1{2^{n}})\delta_n$ and a function $p:[0,s]\to O[F;\tfrac{3}{2^m}]$ such that $p(0)=a$, $p(s)=b$, and for every number $n\ge m$ and points $x,y\in[0,s]$ with $|x-y|<\delta_n$ we have $d_X(p(x),p(y))\le\frac6{2^n}$.
\end{lemma}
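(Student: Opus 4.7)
The plan is to construct $p$ as the uniform limit of a sequence of piecewise constant approximations $p_n$ for $n \geq m$, each modelled on a chain of closed connected sets at level $n$, with the chain at level $n+1$ refining that at level $n$. The chains will be drawn from the nested covers $(\C_n)_{n\in\w}$ of $X$ provided by Lemma~\ref{l:cover}.

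At level $m$, the family $\C_m[F]$ covers $F$ by closed connected subsets of diameter $\leq 3/2^m$; every element meets the connected set $F$ and so lies in $O[F; 3/2^m]$, and the union $\bigcup \C_m[F]$ is connected. Since $a, b \in F$, the plan is to extract a minimal $(\{a\},\{b\})$-connector, which by Lemma~\ref{l:connector} forms an $(\{a\},\{b\})$-chain $(E^m_1, \ldots, E^m_{K_m})$ with $K_m \leq S_X(2^{-m})$. Transition points $x^m_i \in E^m_{i-1} \cap E^m_i$ will be fixed, with $x^m_1 \defeq a$ and $x^m_{K_m+1} \defeq b$. Recursively, given the level-$n$ chain and its transition points, for each $i$ I would use Lemma~\ref{l:cover}(3) to write $E^n_i = \bigcup \C_{n+1}(E^n_i)$ and extract a minimal $(\{x^n_i\},\{x^n_{i+1}\})$-connector in $\C_{n+1}(E^n_i)$; Lemma~\ref{l:connector} turns this into a sub-chain inside $E^n_i$. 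Concatenating these sub-chains and pruning duplicates will give the level-$(n+1)$ chain, with $K_{n+1} \leq S_X(2^{-(n+1)})$.

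Each level-$n$ chain element $E^n_j$ will be assigned a closed sub-interval $I^n_j \subseteq [0,s]$ whose length equals $\delta_n$ plus the total length of its level-$(n+1)$ descendant intervals, so that the level-$(n+1)$ intervals partition their parent $I^n_j$ and each $|I^n_j| \geq \delta_n$. Unrolling this recursion yields $s = \sum_{n \geq m} K_n \delta_n \leq \sum_{n \geq m} S_X(2^{-n}) \delta_n < \infty$. Setting $p_n(t) = x^n_j$ for $t \in I^n_j$ (taking the shared endpoint value at junctions), the nested refinement together with $\diam E^n_j \leq 3/2^n$ makes the sequence $(p_n)_{n \geq m}$ uniformly Cauchy, so its limit $p \defeq \lim p_n$ is a continuous map from $[0, s]$ into $O[F; 3/2^m]$ with $p(0) = a$ and $p(s) = b$. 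For the continuity modulus, if $|x - y| < \delta_n$, then since each level-$n$ interval has length at least $\delta_n$, the points $x, y$ lie in at most two consecutive intervals $I^n_j$ and $I^n_{j+1}$; hence $p(x), p(y) \in E^n_j \cup E^n_{j+1}$, and as these sets intersect and each has diameter $\leq 3/2^n$, the bound $d_X(p(x), p(y)) \leq 6/2^n$ follows.

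The hardest part will be the coherent selection of sub-chains so that the concatenated chain at each level $n$ contains no duplicate sets: a single set of $\C_n$ may lie in two adjacent parents $E^{n-1}_i$ and $E^{n-1}_{i+1}$ and could be naturally forced into both sub-chains, giving only the weaker bound $K_n \leq 2 S_X(2^{-n})$ rather than the required $S_X(2^{-n})$. Simultaneously preserving the refinement, connectedness, and length constraints across all levels while achieving the sharp bound on $s$ is where the bulk of the technical work will concentrate.
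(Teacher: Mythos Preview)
Your nested-chain construction is exactly the paper's: Claim~5.2 builds, for each $n\ge m$, a minimal $(\{a\},\{b\})$-connector $\M_n\subseteq\C_n$ with $\M_m\subseteq\C_m[F]$ and with each $\M_{n+1}$ obtained by concatenating, inside every $M_i\in\M_n$, a minimal $(A_i,B_i)$-connector $\A_i\subseteq\C_{n+1}(M_i)$ joining the designated transition points, precisely as you describe. Where you diverge from the paper is in the parametrisation. The paper does \emph{not} build nested intervals and take a uniform limit of piecewise constant maps; instead it forms the compact connected intersection $I=\bigcap_{n\ge m}\bigcup\M_n$, picks one marker $c_n(M)\in M$ for each $M\in\M_n$, linearly orders the countable set $L=\bigcup_n c_n[\M_n]$ compatibly with the chain orders, defines $g(x)=\sum_{z\prec x}w(z)$ with $w(c_n(M))=\delta_n$, and takes $p$ to be the continuous extension of $g^{-1}$ to $[0,s]$. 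The two bookkeeping schemes encode the same data and yield the same $s=\sum_{n\ge m}|\M_n|\,\delta_n$ and the same modulus estimate, but the paper's order-theoretic route avoids having to specify a coherent system of nested intervals.

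On the ``hardest part'' you flag: the paper sidesteps it by treating $\M_{n+1}\defeq\bigcup_{i}\A_i$ as a \emph{family of sets}, so a set lying in two adjacent $\A_i$'s is counted once, giving $|\M_{n+1}|\le|\C_{n+1}|\le S_X(2^{-n-1})$ for free; Lemma~4.3 is then invoked to recover the canonical chain enumeration of this set. The sharp bound therefore does not need a delicate duplicate-avoidance argument at the level of sequences. One genuine wrinkle in your write-up: you assert both that $|I^n_j|=\delta_n+\sum(\text{children lengths})$ and that the children partition $I^n_j$, which cannot both hold. Presumably the children tile a subinterval of $I^n_j$ of co-length $\delta_n$; you then need to specify the value of $p_{k}$ for $k>n$ on that leftover piece (e.g.\ constant equal to the appropriate transition point) so that the sequence $(p_k)$ is defined on all of $[0,s]$ and remains uniformly Cauchy.
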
 

\begin{proof} Fix a connected set $F\subseteq X$, points $a,b\in F$ and a number $m\in\w$. If $a=b$, then we can put $s=0$ and $p:[0,s]\to \{a\}=\{b\}$ be the constant map. So, assume that $a\ne b$. 

By Lemma~\ref{l:cover}, there exists a sequence $(\C_n)_{n\in\w}$ of covers of $X$ by closed connected subsets such that $\C_0=\{X\}$ and for every $n\in\IN$ the following conditions are satisfied:
\begin{enumerate}
\item $|\C_n|\le S_X(2^{-n})$;
\item $\mesh(\C_n)\le \frac3{2^n}$;
\item $C=\bigcup\C_n(C)$ for every $C\in\C_{n-1}$.
\end{enumerate}

\begin{claim}\label{cl:M} There exist a sequence $(\M_n)_{n=m}^\infty$ of minimal $(\{a\},\{b\})$-connectors $\M_n\subseteq\C_n$ and a double sequence $(\mu_k^n)_{m\le k\le n}$ of surjective maps $\mu_k^n:\M_n\to\M_k$ such that $\M_m\subseteq\C_m[F]$ and for every numbers $k,n\in\w$ with  $m\le k\le n$ the following conditions are satisfied:
\begin{enumerate}
\item[(4)] If $n>m$, then $\bigcup\M_{n}\subseteq\bigcup\M_{n-1}$;
\item[(5)] $\mu_{k}^{n}:\M_{n}\to\M_k$ is an order-preserving map of the $(\{a\},\{b\})$-connectors $\M_n$ and $\M_k$ endowed with their canonical linear orders;
\item[(6)] $M\subseteq \mu_k^{n}(M)$ for every $M\in\M_{n}$.
\end{enumerate}
\end{claim}

\begin{proof} The sequences $(\M_n)_{n=m}^\infty$ and $(\mu_k^{n})_{m\le k\le n}$ will be constructed by induction on $n\ge m$. To start the inductive construction, observe that the family $\C_m[F]=\{C\in \C_m:C\cap F\neq\emptyset\}$ is an $(\{a\},\{b\})$-connector and hence it contains some minimal $(\{a\},\{b\})$-connector $\M_m\subseteq\C_m[F]$. Let $\mu_m^m:\M_m\to\M_m$ be the identity map of $\M_m$. Assume that for some $n\ge m$ we have defined sequences $(\M_k)_{k=m}^n$ and $(\mu_k^n)_{m\le k\le n}$ satisfying the inductive conditions.

By Lemma~\ref{l:connector}, for the minimal $(\{a\},\{b\})$-connector $\M_n$ there exists a unique $(\{a\},\{b\})$-chain $(M_{1},\dots,M_{|\M_n|})$ such that $\M_n=\{M_{1},\dots, M_{|\M_n|}\}$. By the condition (3), for every $M\in\M_n$ the family $\C_{n+1}(M)\defeq\{C\in\C_{n+1}:C\subseteq M\}$ has connected union $\bigcup\C_{n+1}(M)=M\in\C_n$. 

Inductively for every $i\in\{1,\dots,|\M_n|\}$ we shall construct two disjoint nonempty sets $A_i,B_i$ in $M_i$ and a minimal $(A_i,B_i)$-connector $\A_i\subseteq \C_{n+1}(M_i)$ such that the following conditions are satisfied:
\begin{enumerate}
\item[(7)] if $i=1$, then $A_i=\{a\}$;
\item[(8)] if $i>1$, then $A_i=M_i\cap\max\A_{i-1}$;
\item[(9)] $B_i=M_i\cap M_{i+1}$, where $M_{|\M_n|+1}=\{b\}$.
\end{enumerate}
In the inductive condition (8) by $\max \A_{i-1}$ we denote the maximal element of the $(A_{i-1},B_{i-1})$-connector $\A_{i-1}$ endowed with its canonical linear order. We start the inductive construction putting $A_1=\{a\}\subset M_1$ and 
$B_1=M_1\cap M_2$ where $M_{|\M_n|+1}=\{b\}$. 
Since $(M_1,\dots,M_{|\M_n|})$ is an $(\{a\},\{b\})$-chain, the sets $A_1$ and $B_1$ are disjoint.  Since the family $\C_{n+1}(M_1)$ is a finite $(A_1,B_1)$-connector, there exists a minimal $(A_1,B_1)$-connector $\A_1\subseteq\C_{n+1}(M_1)$. 

Now assume that for some $i\le |\M_n|$ we have constructed disjoint nonempty sets $A_{i-1},B_{i-1}\subseteq M_{i-1}$ and a minimal $(A_{i-1},B_{i-1})$-connector $\A_{i-1}\subseteq\C_{n+1}(M_{i-1})$. The condition (9) ensures that $B_{i-1}=M_{i-1}\cap M_{i}$. Observe that for the maximal element $\max\A_{i-1}$ of the minimal $(A_{i-1},B_{i-1})$-connector $\A_{i-1}$ endowed with its canonial linear order $\preceq_{i-1}$, the intersection  $B_{i-1}\cap\max\A_{i-1}=M_{i-1}\cap M_{i}\cap\max\A_{i-1}=M_{i}\cap\max\A_{i-1}={\!\!:}\,A_{i}$ is nonempty. Since the sets $M_{i-1}$ and $M_{i+1}$ are disjoint, the sets $A_{i}\subseteq \max\A_{i-1}\subseteq M_{i-1}$ and $B_{i}\defeq M_{i}\cap M_{i+1}$ are disjoint, too. Since the family $\C_{n+1}(M_{i})$ has connected union $\bigcup\C_{n+1}(M_i)=M_i\supseteq A_i\cup B_i$, it is a finite $(A_{i},B_i)$-connector, containing a minimal $(A_i,B_i)$-connector $\A_i\subseteq\C_{n+1}(M_i)$. This completes the inductive step of the construction of the sequence $(\A_i)_{i=1}^{|\M_n|}$.
\smallskip

After completing the inductive construction, consider the family $\M_{n+1}\defeq\bigcup_{i=1}^{|\M_n|}\A_i$ and applying Lemma~\ref{l:connector}, show that it is a minimal $(\{a\},\{b\})$-connector. Let $\mu^{n+1}_{n+1}:\M_{n+1}\to\M_{n+1}$ be the identity function of $\M_{n+1}$ and let $\mu_n^{n+1}:\M_{n+1}\to \M_n$ be the function such that $\mu_n^{n+1}[\A_i]=\{M_i\}$ for every $i\in\{1,\dots,|\M_n|\}$.
It is easy to see that the function $\mu^{n+1}_n:\M_{n+1}\to\M_n$ is order-preserving, and $M\subseteq\mu_n^{n+1}(M)$ for every $M\in\M_{n+1}$. For every $k\in\w$ with $m\le k<n$, let $\mu_k^{n+1}\defeq\mu^n_k\circ \mu^{n+1}_n$. Applying the inductive conditions (5), (6), show that the function $\mu^{n+1}_k:\M_{n+1}\to\M_k$ is order-preserving, and $M\subseteq\mu_k^{n+1}(M)$ for every $M\in\M_{n+1}$.\end{proof}

Let $(\M_n)_{n=m}^\infty$ and $(\mu_k^{n})_{m\le k\le n}$ be the sequences constructed in Claim~\ref{cl:M}.
For every $n\ge m$, let $\preceq_n$ be the canonical linear order on the $(\{a\},\{b\})$-connector $\M_n\subseteq\C_n$. 
The condition (4) of Claim~\ref{cl:M} ensures that  the sequence $(\bigcup\M_n)_{n=m}^\infty$ is decreasing.
By \cite[Theorem 6.1.18]{E}, the intersection $I=\bigcap_{n=m}^\infty \bigcup\M_n$ is a compact connected set containing the points $a,b$. Observe that
$$\textstyle I=\bigcap_{n=m}^\infty\bigcup\M_n\subseteq\bigcup\M_m\subseteq\bigcup\C_m[F]\subseteq O[F;\mesh(\C_m)]\subseteq O[F;\tfrac3{2^m}].$$

On the set $I$ consider the relation $\preceq$ defined by $x\preceq y$ iff for every  number $n\ge m$ with $\frac6{2^n}<d_X(x,y)$ and every $M_x,M_y\in \M_n$ with $x\in M_x$ and $y\in M_y$ we have $M_x\preceq_n M_y$. 

\begin{claim}\label{cl:prec-I} For two points $x,y\in I$ the relation $x\prec y$ holds if and only if there exist $n\ge m$ with $\frac6{2^n}<d_X(x,y)$ and sets $M_x,M_y\in\M_n$ such that  $x\in M_x$, $y\in M_y$ and $M_x\prec_n M_y$.
\end{claim}

\begin{proof} The ``only if'' part of this characterization follows from the definition of the relation $\preceq$ on $I$. To prove the ``if'' part, assume that for two points $x,y\in I$  there exist $n\ge m$ and sets $M_x,M_y\in\M_n$ such that $\frac6{2^n}<d_X(x,y)$, $x\in M_x$, $y\in M_y$ and $M_x\prec_n M_y$. To see that $x\prec y$, take any $k\ge m$ with $\frac6{2^k}<d_X(x,y)$ and any sets $M'_x,M_y'\in\M_k$ with $x\in M'_x$ and $y\in M'_y$. We should prove that $M'_x\preceq_k M_y'$. Let $l\defeq\min\{n,k\}$ and $L_x\defeq \mu^n_l(M_x)$, $L_y\defeq\mu^n_l(M_y)$, $L_x'\defeq\mu_l^k(M_x')$ and $L_y'\defeq\mu^k_l(M_y')$. The condition (6) of Claim~\ref{cl:M} ensures that $x\in M_x\cap M_x'\subseteq L_x\cap L_x'$ and $y\in M_y\cap M_y'\subseteq L_y\cap L_y'$. The order-preserving property of the function $\mu^n_l$ implies $L_x\preceq_l L_y$. Since $\diam(L_x)+\diam(L_y)\le 2{\cdot}\mesh(\M_l)\le2{\cdot}\mesh(\C_l)\le \frac6{2^l}=\max\{\frac6{2^n},\frac6{2^k}\}<d_X(x,y)$, the sets $L_x$ and $L_y$ are disjoint and hence the set $\{M\in\M_l:L_x\prec_l M\prec_l L_y\}$ is not empty. Then $L_x\cap L_x'\ne\emptyset\ne L_y\cap L_y'$ implies $L_x'\preceq L'_y$.  Since $\diam(L'_x)+\diam(L'_y)\le 2{\cdot}\mesh(\M_l)\le2{\cdot}\mesh(\C_l)<d_X(x,y)$, the sets $L'_x$ and $L'_y$ are distinct and hence $L_x'\prec_l L_y'$. The order-preserving property of the function $\mu_l^k$ ensures that $M'_x\prec_k M_y'$, witnessing that $x\prec y$.
\end{proof}

\begin{claim} The relation $\preceq$ is a linear order on $I$.
\end{claim}

\begin{proof} We need to check that the relation $\preceq$ is reflexive, antisymmetric, transitive, and linear.
\smallskip

1. The definition of $\preceq$ implies that $x\preceq x$ for every $x\in I$, so the relation $\preceq$ is reflexive.
\smallskip

2. To show that the relation $\preceq$ is antisymmetric, take any  elements $x,y\in I$ with $x\preceq y$ and $y\preceq x$. Assuming that $x\ne y$, we can choose a number $n\ge m$ with $\frac{6}{2^n}<d_X(x,y)$ and  sets $M_x,M_y\in\M_n$ with $x\in M_x$ and $y\in M_y$. The inequalities $x\preceq y\preceq x$ imply $M_x\preceq_n M_y\preceq_n M_x$ and hence $M_x=M_y$, by the antisymmetry of the order $\preceq_n$ on $\M_n$. Then $d_X(x,y)\le\diam(M_x)\le\mesh(\M_n)\le\mesh(\C_n)\le \frac3{2^n}<\frac6{2^n}<d_X(x,y)$, which is a contradiction showing that $x=y$.
\smallskip

3. To show that the reflexive relation $\preceq$ is transitive, it suffices to show that $x\preceq z$ for any distinct points $x,y,z\in I$ with $x\preceq y\preceq z$. Take any number $k\ge m$ with 
$$\tfrac6{2^k}<\min\{d_X(x,y),d_X(y,z),d_X(x,z)\}$$ and any sets $M_x,M_y,M_z\in\M_k$ with $x\in M_x$, $y\in M_y$, $z\in M_z$. It follows from 
$x\preceq y\preceq z$ that $M_x\preceq_k M_y\preceq_k M_z$ and hence $M_x\preceq_k M_z$ by the transitivity of the  order $\preceq_k$. Applying Claim~\ref{cl:prec-I}, we conclude that $x\preceq z$.
\smallskip

4. To show that the partial order $\preceq$ is linear, take any distinct elements $x,y\in I$. Choose any number $k\ge m$ with $\frac6{2^k}\le d_X(x,y)$ and any sets  $M_x,M_y\in\M_k$ with $x\in M_x$ and $y\in M_y$. Since the canonical order $\preceq_k$ of $\M_k$ is linear, either $M_x\preceq_k M_y$ or $M_y\preceq_k M_x$. 
Applying Claim~\ref{cl:prec-I}, we conclude that $x\preceq y$ or $y\preceq x$.
%
\end{proof}

The definition of the linear order $\preceq$ on $I$ implies that $a=\min (I,\preceq)$ and $b=\max (I,\preceq)$.

\begin{claim}\label{cl:intersection} For every $n\ge m$ and $M\in\M_n$ the set $\check M\defeq M\cap I\setminus\bigcup(\M_n\setminus\{M\})$ is infinite.
\end{claim}

\begin{proof} By Lemma~\ref{l:connector}, there exists an $(\{a\},\{b\})$-chain $(M_1,\dots,M_{l})$ such that $\M_n=\{M_1,\dots,M_{l}\}$. Given any $M\in\M_n$, find a unique $k\in\{1,\dots,l\}$ such that $M=M_k$. Let $M_0=\{a\}$ and $M_{l+1}=\{b\}$. By the definition of an $(\{a\},\{b\})$-chain, the sets $M_{<k}\defeq \bigcup_{i=0}^{k-1}M_i$ and $M_{>k}\defeq\bigcup_{i=k+1}^{l+1}M_i$ are disjoint and hence $I_{<k}\defeq I\cap M_{<k}$ and $I_{>k}\defeq I\cap M_{>k}$ are disjoint nonempty closed subsets of the connected space $I$. Assuming that $\check M=(I\cap M_k)\setminus(M_{<k}\cup M_{>k})$ is finite, we conclude that the set $I\setminus I_{<k}=(\check M\setminus I_{<k})\cup I_{>k}$ is closed in $I$ and hence $I=I_{<k}\cup (I\setminus I_{<k})$ is the union of two disjoint nonempty closed subsets, which contradicts the connectedness of $I$. This contradiction shows that the set $\check M$ is infinite.
\end{proof}

Using Claim~\ref{cl:intersection}, construct inductively a sequence $(c_n)_{n=m}^\infty$ of injective functions $c_n:\M_n\to I$ such that 
\begin{itemize}
\item $c_n(M)\in (I\cap M)\setminus(\{a,b\}\cup\bigcup(\M_n\setminus\{M\}))$ for every $M\in\M_n$, and
\item  $c_n[\M_n]\cap c_k[\M_k]=\emptyset$ for all $k$ with $m\le k<n$.
\end{itemize}

\begin{claim}\label{cl:mono} For any number $n\ge m$ and sets $M'\prec_n M''$ in $\M_n$ the following properties hold:
\begin{enumerate}
\item $c_n(M')\prec c_n(M'')$;
\item $\{x\in I:c_n(M')\preceq x\preceq c_n(M'')\}\subseteq \bigcup\{M\in\M_n:M'\preceq_n M\preceq_n M''\}$;
\item $\{x\in I:c_n(M')\preceq x\}\subseteq\bigcup\{M\in\M_n:M'\preceq_n M\}$;
\item $\{x\in I:x\preceq c_n(M'')\}\subseteq\bigcup\{M\in\M_n:M\preceq_n M''\}$.
\end{enumerate}
\end{claim}

\begin{proof} 1. The injectivity of the map $c_n:M_n\to I$ guarantees that $c_n(M')\ne c_n(M'')$. Assuming that $c_n(M')\not\prec c_n(M'')$, we obtain that $c_n(M'')\prec c_n(M')$ as the order $\preceq$ is linear. Choose any number $k\ge n$ such that $\frac6{2^k}<d_X(c_n(M'),c_n(M''))$ and find sets $M'_k,M_k''\in\M_k$ such that $c_n(M')\in M'_k$ and $c_n(M'')\in M_k''$. By the definition of the order $\preceq$, the inequality $c_n(M'')\prec c_n(M')$ implies that $M''_k\preceq_k M'_k$. The condition (6) of Claim~\ref{cl:M} ensures that $c_n(M')\in M'_k\subseteq\mu^k_n(M'_k)\in\M_n$ and $c_n(M'')\in M_k''\subseteq\mu^k_n(M_k'')\in\M_n$. Taking into account that the set $M'$ is a unique set in $\M_n$ containing the point $c_n(M')$, we conclude that $\mu^k_n(M'_k)=M'$. By analogy we can show that $\mu^k_n(M_k'')=M''$.  The inequality $M''_k\preceq_k M'_k$ and the order-preserving property of the map $\mu^k_n:\M_k\to\M_n$ imply that $M''=\mu^k_n(M''_k)\preceq_n \mu^k_n(M'_k)=M'$, which contradicts the strict  inequality  $M'\prec_n M''$.
\smallskip 

2.
Now take any $x\in I$ with $c_n(M')\preceq x\preceq c_n(M'')$. If $x\in\{c_n(M'),c_n(M'')\}$, then $x\in M'\cup M''\subseteq\bigcup\{M\in\M_n:M'\preceq_n M\preceq_n M''\}$ and we are done. So, we assume that $c_n(M')\prec x\prec c_n(M'')$. Choose any number $k\ge n$ such that $\frac6{2^k}<\min\{d_X(c_n(M'),x),d_X(x,c_n(M''))\}$ and then choose sets $M'_k,M_k,M_k''\in\M_k$ with $c_n(M')\in M'_k$, $x\in M_k$ and $c_n(M'')\in M''_k$. By the definition of the order $\preceq$, the inequalities $c_n(M')\prec x\prec c_n(M'')$ imply $M'_k\preceq_k M_k\preceq_k M''_k$. Repeating the argument from the preceding paragraph, we can show that $M'=\mu^k_n(M'_k)\preceq_n \mu^k_n(M_k)\preceq_n\mu^k_n(M_k'')=M''$. The condition (6) of Claim~\ref{cl:M} ensures that $$\textstyle x\in M_k\subseteq\mu^k_n(M_k)\subseteq\bigcup\{M\in\M_n:M'\preceq_n M\preceq_n M''\}.$$  
\smallskip

3,4. The last two  properties of Claim~\ref{cl:mono} can be proved by analogy with the property (2).
\end{proof}

\begin{claim}\label{cl:inter} For any points $x\prec y$ in $I$  and any number $n\ge m$ with $\frac{6}{2^n}<d_X(x,y)$ there exists $M\in\M_n$ such that $x\prec c_n(M)\prec y$.
\end{claim}

\begin{proof} Let $M_x$ be the maximal element of the linearly ordered set $(\M_n,\preceq_n)$ such that $x\in M_x$ and $M_y$ be the minimal element of $(\M_n,\preceq_n)$ such that $y\in M_y$.  The inequality $x\prec y$ implies $M_x\preceq_n M_y$. It follows from $\diam(M_x)+\diam(M_y)\le 2{\cdot}\mesh(\M_n)\le 2{\cdot}\mesh(\C_n)\le\frac6{2^n}<d_X(x,y)$ that $M_x\cap M_y=\emptyset$ and hence there exists a set $M\in\M_n$ such that $M_x\prec_n M\prec_n M_y$. The choice of the point $c_n(M)$ ensures that 
$$\textstyle{c_n(M)\in I\cap M\setminus \bigcup(\M_n\setminus\{M\})\subseteq I\cap M\setminus (M_x\cup M_y)}$$
and hence $x\ne c_n(M)\ne y$. 

Since the sets $M_x,M_y$ are closed and $c_n(M)\notin M_x\cup M_y$, there exists $k\ge n$ such that
\begin{itemize}
\item  $\frac6{2^k}<\min\{d_X(x,c_n(M)),d_X(c_n(M),y)\}$, and
\item any point $z\in X$ with $d_X(z,c_n(M))<\frac3{2^k}$ does not belong to $M_x\cup M_y$. 
\end{itemize}
Take any sets $M_x',M_y',M'\in\M_k$ such that $x\in M'_x$, $y\in M_y'$ and $c_n(M)\in M'$. Let $M_x''=\mu^k_n(M_x')$, $M_y''=\mu^k_n(M_y')$ and $M''=\mu^k_n(M')$. The condition (6) of Claim~\ref{cl:M} ensures that $x\in M_x'\subseteq M_x''$, $y\in M_y'\subseteq M_y''$ and $c_n(M)\in M'\subseteq M''$. Taking into account that $c_n(M)\in M''\cap M\setminus\bigcup(\M_n\setminus\{M\})$, we conclude that $M''=M$.  The choice of the sets $M_x$ and $M_y$ implies that $M_x''\preceq_n M_x\prec_n M=M''\prec_n M_y\preceq_n  M_y''$.  The strict inequalities $M_x''\prec_n M''\prec_n M_y''$ and the order-preserving property of the map $\mu_n^k:\M_k\to\M_n$ imply that $M_x'\prec_k M'\prec_k M_y'$ and hence $x\prec c_n(M)\prec y$, by Claim~\ref{cl:prec-I}. 
\end{proof}

\begin{claim}\label{cl:upper} For any point $x\in I$  and any number $n\ge m$ with $\frac{6}{2^n}<d_X(x,b)$ there exists $M\in\M_n$ such that $x\prec c_n(M)$.
\end{claim}

\begin{proof} Let $M_x$ be the maximal element of the linearly ordered set $(\M_n,\preceq_n)$ such that $x\in M_x$ and $M_b$ be the unique element of $\M_n$ such that $b\in M_b$.  The definition of the linear $\preceq_n$ implies $M_x\preceq_n M_b$. It follows from $\diam(M_x)+\diam(M_b)\le 2{\cdot}\mesh(\M_n)\le 2{\cdot}\mesh(\C_n)\le\frac6{2^n}<d_X(x,b)$ that $M_x\cap M_y=\emptyset$ and hence there exists a set $M\in\M_n$ such that $M_x\prec_n M$. By analogy with Claim~\ref{cl:inter} we can show $x\prec c_n(M)$.
\end{proof}

Consider the countable set $L=\bigcup_{n=m}^\infty c_n[\M_n]$ and observe that $$L\subseteq \bar L\subseteq I\subseteq \textstyle\bigcup\M_m\subseteq \bigcup\C_m[F]\subseteq O[F;\tfrac3{2^m}].$$ Let $w:L\to\IR$ be the function defined by $w(c_n(M))\defeq\delta_n$ for every $n\ge m$ and $M\in\M_n$. 

Consider the increasing function $$g:L\to \IR,\quad g:x\mapsto \sum_{z\prec x}w(z).$$ It is clear that $g[L]\subseteq [0,s]$ where $$s=\sum_{x\in L}w(x)=\sum_{n=m}^\infty|\M_n|\cdot\delta_n\le \sum_{n=m}^\infty|\C_n|\cdot\delta_n=\sum_{n=m}^\infty S_X(\tfrac1{2^n})\delta_n<\infty.$$
It is clear that the function $g:L\to [0,s]$ is injective, so we can consider the inverse function $f\defeq g^{-1}:g[L]\to L\subseteq I$.

\begin{claim}\label{cl:Omega} For any number $n\ge m$ and points $x,y\in L$ with $|g(x)-g(y)|<\delta_n$ we have $$d_X(x,y)\le \tfrac6{\;2^n}.$$
\end{claim}

\begin{proof} We lose no generality assuming that $x\prec y$. Assuming that  $d_X(x,y)>\frac6{2^n}$, we can apply Claim~\ref{cl:inter} and find a set $M\in\M_n$ such that $x\prec c_n(M)\prec y$. Then $$|g(y)-g(x)|=g(y)-g(x)=\sum_{x\preceq z\prec y}w(z)\ge w(c_n(M))=\delta_n,$$ which contradicts the choice of $x,y$. This contradiction shows that $d_X(x,y)\le \frac6{2^n}$.
\end{proof}

Claim~\ref{cl:Omega} implies   
\begin{claim}\label{cl:Omega1} The function $f:g[L]\to L$ is uniformly continuous and
$$\forall n\ge m\;\;\forall u,v\in g[L]\;\big(|u-v|<\delta_n\Rightarrow d_X(f(u),f(v))\le\tfrac6{2^n}\big).
$$
\end{claim}

Let $\bar f:\overline{g[L]}\to\overline L$ be the unique continuous function extending the uniformly continuous function $f$.  Claim~\ref{cl:Omega1}  implies

\begin{claim}\label{cl:Omega2} For any number $n\ge m$ and real numbers $u,v\in \overline{g[L]}$ with $|u-v|<\delta_n$, we have $d_X(\bar f(u),\bar f(v))\le\tfrac6{2^n}$.
\end{claim}

\begin{claim} $\{0,s\}\subseteq \overline{g[L]}$.
\end{claim}

\begin{proof} Given any $\e>0$, find a nonempty finite set $F\subseteq L$ such that $\sum_{x\in L\setminus F}w(x)<\e$. Then $g(\min F)\le \sum_{x\in L\setminus F}w(x)<\e$ and hence $0\in \overline{g[L]}$. By Claim~\ref{cl:upper}, there exists $y\in L$ such that $\max F\prec y$. Then $s-\e<\sum_{x\in F}w(x)\le \sum_{x\prec y}w(x)=g(y)\le s$ and hence $s\in\overline{g[L]}$.
\end{proof} 

\begin{claim}\label{cl:ab} $\bar f(0)=a$ and $\bar f(s)=b$.
\end{claim}

\begin{proof} To derive a contradiction, assume that $\bar f(0)\ne a$. Since $0\in \overline{g[L]}$, there exists a   sequence $\{a_k\}_{k\in\w}\subseteq L$ such that the sequence  $(g(a_k))_{k\in\w}$ is strictly decreasing and tend to zero. The  continuity of $\bar f$ ensures that $\bar f(0)=\lim_{k\to\infty}a_k$. Assuming that $\bar f(0)\ne a$, we can find a number $n\in\w$ such that $\frac4{2^n}<d_X(a,\bar f(0))$. By Lemma~\ref{l:connector}, there exists an $(\{a\},\{b\})$-chain $(M_1,\dots,M_l)$ such that $\M_n=\{M_1,\dots,M_l\}$. Since $\bar f(0)=\lim_{k\to\infty}a_k$ and $0=\lim_{k\to\infty}g(a_k)$, there exists $k\ge n$ such that $d_X(a_k,\bar f(0))<\frac1{2^n}$ and $g(a_k)<g(c_n(M_1))$. Since $g:L\to g[L]$ is an order isomorphism,  $a\prec a_k\prec c_n(M_1)$. Claim~\ref{cl:mono}(4) ensures that $a_k\in M_1$ and hence $d_X(a,a_k)\le\diam(M_1)\le\mesh(\M_n)\le\mesh(\C_n)\le\tfrac3{2^n}$ and $$d_X(a,\bar f(0))\le d_X(a,a_k)+d_X(a_k,\bar f(0))\le \tfrac3{2^n}+\tfrac1{2^n}=\tfrac4{2^n}<d_X(a,\bar f(0)).$$
This contradiction shows that $\bar f(0)=a$. 

By analogy we can prove that $\bar f(s)=b$.
\end{proof} 

\begin{claim}\label{cl:constant} For any real numbers $u<v$ with $[u,v]\cap\overline{g[L]}=\{u,v\}$ we have $\bar f(u)=\bar f(v)$.
\end{claim}

\begin{proof} Consider the sets $A=\{x\in L:g(x)\le u\}$ and $B=\{x\in L:g(x)\ge v\}$. The order-preserving property of the bijective function $g:L\to g[L]$ and the equality $[u,v]\cap\overline{g[L]}=\{u,v\}$ ensure that the sets $A,B$ are not empty, $A\cup B=L$, and $x\prec y$ for all $x\in A$ and $y\in B$. 

Consider four cases.
\smallskip

1. The set $A$ has no maximal element and the set $B$ has no minimal element. Since 
$$
\sum_{z\in L}w(z)=\sum_{n=m}^\infty\sum_{M\in\M_n}w(c_n(M))=\sum_{n=m}^\infty|\M_n|{\cdot}\delta_n\le 
\sum_{n=m}^\infty|\C_n|{\cdot}\delta_n=\sum_{n=m}^\infty S_X(\tfrac1{2^{n}}){\cdot}\delta_n<\infty,
$$ there exists a number $l>m$ such that $\sum_{k=l}^\infty\sum_{M\in\M_k} w(c_k(M))<v-u$. Since the set $A$ has no maximal element and the set $B$ has no minimal element, there are points $x\in A$ and $y\in B$ such that the order interval $[x,y]=\{z\in L:x\preceq z\preceq y]$ is disjoint with the finite set $\bigcup_{i=m}^{l-1}c_i[\M_i]$. Then $$v-u\le g(y)-g(x)=\sum_{x\preceq z\prec y}w(z)\le\sum_{k=l}^\infty\sum_{M\in\M_k}w(c_k(M))<v-u,$$
which shows that the case 1 is impossible.
\smallskip

2. The set $A$  has the largest element $x\in A$ and the set $B$ has the smallest element $y\in B$. Then the order interval $\{z\in L:x\prec z\prec y\}$ is empty, which contradicts Claim~\ref{cl:inter}. So, the case 2 is impossible, too.
\smallskip

3. The set $A$ has no largest element but the set $B$ has the smallest element $y\in B$. Then $g(y)=\sum_{z\prec y}w(z)=\sum_{z\in A}w(z)=\sup_{x\in A}\sum_{z\prec x}w(z)=\sup_{x\in A}g(x)\le u$ and hence $y\in A=L\setminus B$, which contradicts the choice of $y$.
\smallskip

4. The set $A$ has the largest element $x\in A$ and the set $B$ has no smallest element $B$.  In this case we can choose a sequence $\{b_k\}_{k\in\w}\subseteq B$ such that $b_{k+1}\prec b_k$ for every $k\in\w$ and for any $y\in B$ there exists $k\in\w$ such that $b_k\prec y$.
Taking into account that $x$ is the largest element of $A$, we conclude that $g(x)=\sum_{z\prec x}w(z)=u$ and $g(x)+w(x)=v=\lim_{k\to\infty}g(b_k)$. Then $\bar f(u)=f(u)=x$ and $\bar f(v)=\lim _{k\to\infty}b_k\in I$. We claim that $x=\bar f(v)$. Assuming that $x\ne\bar f(v)$, we can find a number $n\ge m$ such that $\frac7{2^n}<\min\{d_X(x,\bar f(v)),d_X(a,x),d_X(x,b)\}$. By Lemma~\ref{l:connector}, there exists an $(\{a\},\{b\})$-chain $(M_1,\dots,M_l)$ such that $\M_n=\{M_1,\dots,M_l\}$. Let $M_0=\{a\}$, $M_{l+1}=\{b\}$, $c_n(M_0)=a$ and $c_n(M_{l+1})=b$. Claim~\ref{cl:mono}(1) implies that $$a=c_n(M_0)\prec_n c_n(M_1)\prec_n\cdots\prec_n c_n(M_l)\prec_n c_n(M_{l+1})=b$$ and hence there exists a number $i\in\{0,\dots,l\}$ such that $c_n(M_i)\preceq_n x\prec c_n(M_{i+1})$. We claim that $i<l$. Indeed, in the opposite case, Claim~\ref{cl:mono}(3) ensures that $x\in M_l$ and hence $d_X(x,b)\le\diam(M_l)\le\mesh(\M_n)\le\mesh(\C_n)\le \frac3{2^n}<d_X(x,b)$, which is a contradiction showing that $i<l$ and hence $c_n(M_{i+1})\in B$. By the choice of the sequence $(b_k)_{k\in\w}$ with $\lim_{k\to\infty}b_k=\bar f(v)$, there exists a number $k\in\w$ such that $d_X(b_k,\bar f(v))<\frac1{2^n}$ and $b_k\prec c_n(M_{i+1})$. It follows from $x\in A$ and $b_k\in B$ that 
$$c_n(M_i)\preceq_n x\prec_n b_k\prec_n c_n(M_{i+1}).$$
By Claim~\ref{cl:mono}(2), $\{x,b_k\}\subseteq M_i\cup M_{i+1}$ and hence
$$d_X(x,b_k)\le\diam(M_i)+\diam(M_{i+1})\le2\mesh(\M_n)\le\tfrac6{2^n}$$ and finally
$$d_X(x,\bar f(v))\le d_X(x,b_k)+d_X(b_k,\bar f(v))\le \tfrac6{2^n}+\tfrac1{2^n}<d_X(x,\bar f(v)).$$
This contradiction completes the proof of the equality $\bar f(u)=x=\bar f(v)$.
\end{proof} 

By Claim~\ref{cl:constant}, the function $\bar f:\overline{g[L]}\to\bar L$ can be extended to a function $p:[0,s]\to\bar L$ such that $p$ is constant on each interval $[u,v]$ such that $[u,v]\cap\overline{g[L]}=\{u,v\}$. By Claim~\ref{cl:ab}, $p(0)=\bar f(0)=a$ and $p(s)=\bar f(s)=b$. Claim~\ref{cl:Omega2} implies that for any number $n\ge m$ and any real numbers $u,v\in[0,s]$ with $|u-v|<\delta_n$ we have $d_X(p(u),p(v))\le\tfrac6{2^n}$.
\end{proof}

The following corollary of Lemma~\ref{l:path} allows us to link points of metric Peano continua by paths with controlled continuity modulus.

\begin{corollary} Let $X$ be a compact connected metric space of diameter $\le 1$ and $\Omega:\IR_+\to\IR_+$ be an increasing function such that $(0,1]\subseteq\Omega[\IR_+] $ and $$\textstyle s\defeq \sum_{n=3}^\infty S_X(\tfrac1{2^n})\Omega^{-1}(\min\{1,\tfrac{12}{\;2^n}\})<\infty.$$
Then for any points $a,b\in X$ there exists a function $p:[0,s]\to X$ such that $p(0)=a$, $p(s)=b$ and $\w_p\le\Omega$.
\end{corollary}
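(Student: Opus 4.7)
The plan is to invoke Lemma~\ref{l:path} with $F:=X$, starting index $m:=3$, the given points $a,b$, and the calibrated sequence
\[
\delta_n\defeq\Omega^{-1}\bigl(\min\{1,\tfrac{12}{2^n}\}\bigr)\quad\text{for }n\ge 3,
\]
extended by $\delta_n:=\delta_3$ for $n\in\{0,1,2\}$ to obtain a sequence indexed by all of $\w$. Since $\Omega$ is increasing (hence injective) and $(0,1]\subseteq\Omega[\IR_+]$, every $\delta_n$ is a well-defined positive real; the sequence $(\delta_n)_{n\in\w}$ is decreasing because $\Omega^{-1}$ is order-preserving on its image and $n\mapsto\min\{1,12/2^n\}$ is non-increasing; and the summability hypothesis $\sum_{n=0}^\infty S_X(2^{-n})\delta_n<\infty$ of Lemma~\ref{l:path} holds because its tail $\sum_{n=3}^\infty S_X(2^{-n})\delta_n$ equals $s$ while the first three terms are manifestly finite.

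Since $F=X$ is the whole ambient space, $O[F;3/2^3]=X$. Applying Lemma~\ref{l:path} (or taking $p$ constant if $a=b$) yields some $s_0\in[0,s]$ and a function $p_0\colon[0,s_0]\to X$ with $p_0(0)=a$, $p_0(s_0)=b$, satisfying
\[
\forall n\ge 3,\ \forall x,y\in[0,s_0]\colon\quad |x-y|<\delta_n\ \Rightarrow\ d_X(p_0(x),p_0(y))\le \tfrac{6}{2^n}.
\]
I then extend $p_0$ to $p\colon[0,s]\to X$ by declaring $p(t):=b$ for $t\in[s_0,s]$. Since $p$ is constant on $[s_0,s]$, the above estimate persists: for any $x\le s_0\le y$ in $[0,s]$ one has $d_X(p(x),p(y))=d_X(p_0(x),p_0(s_0))$ while $|x-s_0|\le|x-y|$, so Lemma~\ref{l:path}'s bound transfers verbatim.

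The final step is to verify $\w_p\le\Omega$. Fix $t>0$ and $x,y\in[0,s]$ with $|x-y|\le t$. If $t\ge\delta_3=\Omega^{-1}(1)$, then $\Omega(t)\ge 1\ge\diam X\ge d_X(p(x),p(y))$. Otherwise $\delta_n\to 0$ (a consequence of the convergence of the series together with $S_X\ge 1$) lets us pick the largest $n\ge 3$ with $t<\delta_n$, so that $\delta_{n+1}\le t<\delta_n$. The first inequality, combined with the extension argument above, gives $d_X(p(x),p(y))\le 6/2^n$; and since $n+1\ge 4$ forces $12/2^{n+1}\le 3/4\le 1$, we have $\delta_{n+1}=\Omega^{-1}(12/2^{n+1})$, so applying $\Omega$ to $\delta_{n+1}\le t$ yields $6/2^n=12/2^{n+1}\le\Omega(t)$. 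Combining the two bounds gives $d_X(p(x),p(y))\le\Omega(t)$, as needed.

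The entire argument rests on Lemma~\ref{l:path}; the only real design choice is the sequence $(\delta_n)$, reverse-engineered so that the lemma's conclusion ``bound $6/2^n$ when $|x-y|<\delta_n$'' precisely matches the target ``bound $\Omega(t)$ when $|x-y|\le t$'' at every dyadic scale. Beyond this calibration the proof is routine bookkeeping, so I expect no serious obstacle.
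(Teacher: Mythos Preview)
Your proof is correct and follows essentially the same route as the paper's: define $\delta_n=\Omega^{-1}(\min\{1,12/2^n\})$, apply Lemma~\ref{l:path} with $F=X$ and $m=3$, and then translate the dyadic estimate into the bound $\w_p\le\Omega$ by locating $t$ between consecutive $\delta_n$'s. You are in fact slightly more careful than the paper, which silently treats the map as defined on all of $[0,s]$ whereas Lemma~\ref{l:path} only guarantees a domain $[0,s_0]$ with $s_0\le s$; your explicit constant extension to $[s_0,s]$ fills this small gap (one cosmetic slip: when you write ``the first inequality'' you mean $t<\delta_n$, not $\delta_{n+1}\le t$).
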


\begin{proof} For every $n\ge 3$ consider the number $\delta_n\defeq\Omega^{-1}(\min\{1,\frac{12}{\;2^n}\})$ which is well-defined because $(0,1]\subseteq\Omega[\IR_+]$. By Lemma~\ref{l:path}, for every $a,b\in X$ there exists a function $p:[0,s]\to X$ such that $p(0)=a$, $p(s)=b$ and for every $n\ge 3$ and $x,y\in[0,s]$ with $|x-y|<\delta_n$ we have $d_X(p(x),p(y))\le\frac6{2^n}$. 

We claim that $d_X(p(x),p(y))\le\Omega(|x-y|)$ for any points $x,y\in [0,s]$. If $|x-y|\ge\Omega^{-1}(1)$, then $d_X(p(x),p(y))\le\diam(X)\le 1=\Omega(\Omega^{-1}(1))\le\Omega(|x-y|)$ and we are done. So, assume that $|x-y|<\Omega^{-1}(1)=\delta_3$ and then $\delta_{n+1}\le |x-y|<\delta_n$ for a unique number $n\ge 3$. The choice of the function $p$ guarantees that 
$$d_X(p(x),p(y))\le\tfrac{6}{2^n}=\Omega(\Omega^{-1}(\tfrac6{2^n}))=\Omega(\Omega^{-1}(\tfrac{12}{2^{n+1}}))=\Omega(\delta_{n+1})\le\Omega(|x-y|).$$
\end{proof}

\section{A controlled Aleksandroff--Urysohn Theorem}\label{s:HM}

By the classical Aleksandrov--Urysohn Theorem \cite[4.18]{K}, every metrizable compact space is a continuous image of the Cantor set. In the following lemma we prove a modification of the Aleksandrov--Urysohn Theorem for maps onto metric Peano continua giving some extra-information on metric connectedness properties of such maps.

\begin{lemma}\label{l:gap} Let $(\e_n)_{n\in\w}$ be a decreasing sequence of positive real numbers and $X$ be a compact connected metric space of nonzero diameter $\le 1$. If $s\defeq\sum_{n=1}^\infty S_X(\tfrac1{2^{n}})\e_n<\infty$, then there exists a closed subset $D\subseteq[0,s]$ with $\{0,s\}\subseteq D$ and a continuous surjective map $\bar f:D\to X$ such that 
\begin{enumerate}
\item for any $n\in\IN$ and real numbers $u<v$ in $D$ with $|u-v|<\e_n$ we have $d_X(\bar f(u),\bar f(v))\le \frac4{2^n}$;
\item for any real numbers $u,v\in D$ with $[u,v]\cap D=\{u,v\}$ there exists $n\in\IN$ such that $v-u=\e_n$ and the doubleton $\{\bar f(u),\bar f(v)\}$ is contained in a connected subset $C\subseteq X$ of diameter $\diam(C)\le \tfrac{\!8}{2^n}$.
\end{enumerate}
\end{lemma}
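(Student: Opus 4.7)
The plan is to build a Cantor-like set $D\subseteq[0,s]$ and a surjection $\bar f:D\to X$ by recursively subdividing $[0,s]$ according to the nested covers of Lemma~\ref{l:cover}. First I apply that lemma to obtain a sequence $(\C_n)_{n\in\w}$ with $\C_0=\{X\}$, $|\C_n|\le S_X(2^{-n})$, $\mesh(\C_n)\le 3/2^n$, and the refinement property $C=\bigcup\C_n(C)$ for every $C\in\C_{n-1}$. For each $n\ge 1$ and each $C\in\C_n$, I fix a ``parent'' $\pi(C)\in\C_{n-1}$ with $C\subseteq\pi(C)$ (guaranteed by property~(3)), making the disjoint union $T=\bigsqcup_{n\ge 0}\C_n$ into a rooted tree at $X$, with some nodes possibly being leaves (childless).

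The core step is an inductive assignment of a closed subinterval $I(C)\subseteq[0,s]$ to each $C\in T$, satisfying $I(X)=[0,s]$ and the following refinement rule: whenever $C\in\C_{n-1}$ has children $C'_1,\dots,C'_k$ in some chosen left-to-right order, the intervals $I(C'_1),\dots,I(C'_k)$ are pairwise disjoint closed subintervals of $I(C)$ with $\min I(C'_1)=\min I(C)$, $\max I(C'_k)=\max I(C)$, and consecutive ones separated by an open gap of length \emph{exactly} $\e_n$. The length budget works because the total inserted gap length is $\sum_{n\ge 1}(\text{number of level-}n\text{ gaps})\cdot\e_n\le\sum_{n\ge 1}|\C_n|\cdot\e_n\le\sum_{n\ge 1}S_X(2^{-n})\e_n=s$, leaving nonnegative slack which I absorb as positive lengths assigned to the leaves, so that $I(X)=[0,s]$ holds exactly.

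Setting $D=\bigcap_n D_n$, where $D_n$ is the union of all level-$n$ intervals together with leaf intervals from earlier levels, $D$ is closed and $\{0,s\}\subseteq D$ (since the $\min/\max$ condition propagates along the leftmost/rightmost branches). For $d\in D$ the nested intervals containing $d$ correspond to a descending chain $C_0\supseteq C_1\supseteq\dots$ in $T$ with $\diam(C_n)\le 3/2^n$, and I define $\bar f(d)$ to be the unique point of $\bigcap_n C_n$, or a chosen point in $C$ when $d$ lies in a leaf interval $I(C)$. Property~(1) then follows because $|u-v|<\e_n$ forces $u,v$ into a common level-$n$ interval $I(C)$ (as the gaps at levels $m\le n$ have lengths $\e_m\ge\e_n$ since $(\e_n)$ is decreasing), giving $d_X(\bar f(u),\bar f(v))\le\diam(C)\le 3/2^n<4/2^n$; this uniform modulus also yields continuity. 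Property~(2) follows because a maximal gap of $D$ is by construction an inserted level-$n$ gap of length exactly $\e_n$ between two sibling intervals whose common parent $C\in\C_{n-1}$ is connected with $\diam(C)\le 3/2^{n-1}=6/2^n\le 8/2^n$ and contains both $\bar f(u)$ and $\bar f(v)$. Surjectivity comes from density plus compactness: given $x\in X$ and $\e>0$, pick $n$ with $3/2^n<\e$ and $C\in\C_n$ with $x\in C$; then any $d\in D\cap I(C)$ satisfies $d_X(\bar f(d),x)\le\diam(C)<\e$, so $\bar f(D)$ is dense, and being compact it equals $X$.

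The main obstacle is the length-budget bookkeeping: arranging the interval lengths so that $I(X)$ is exactly $[0,s]$ while every interior $I(C)$ is long enough to house its subtree's gaps. The Abel-summation-style estimate $\sum_n|\C_n|\cdot\e_n\le s$ is what makes this possible, but the precise distribution of the slack across leaf intervals must be carried out with care so that the leaf intervals and the Cantor-like spine fit together to fill $[0,s]$ seamlessly.
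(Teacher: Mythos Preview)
Your approach is correct in outline and genuinely different from the paper's. The paper does \emph{not} invoke Lemma~\ref{l:cover}: it takes arbitrary covers $\C_n$ with $\mesh(\C_n)\le 2^{-n}$ (not nested), links consecutive levels only by \emph{intersection} via a choice function $\cap_n:\C_{n+1}\to\C_n$, picks a point $c_n(C)\in C$ for each $C\in\C_n$ to build a countable dense set $L\subseteq X$, equips $L$ with a compatible linear order using retractions $r_n:L\to L_n$, and then sets $g(x)=\sum_{y\prec x}w(y)$ with $w(c_n(C))=\e_n$; the set $D$ is $\overline{g[L]}$ and $\bar f$ is the continuous extension of $g^{-1}$. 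Properties (1) and (2) are then extracted by analysing where the retractions $r_k$ first separate two points and by a four-case analysis of gaps in $g[L]$. Your construction, by contrast, uses the nested covers of Lemma~\ref{l:cover} to build a rooted tree and assigns intervals $I(C)$ directly, obtaining $D$ as a Cantor-like intersection; your property~(1) argument (components of $D_n$ are separated by gaps $\ge\e_n$) and property~(2) argument (a maximal gap sits under a common parent in $\C_{n-1}$) are cleaner and even give marginally sharper constants ($3/2^n$ and $6/2^n$). The trade-off is that you consume Lemma~\ref{l:cover} as input, whereas the paper keeps this lemma self-contained; the paper's cumulative-sum-on-an-ordered-set technique is also the same device it reuses in Lemma~\ref{l:path}.

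Two points of care you correctly flag but should make explicit. First, the length bookkeeping: writing $\ell(C)=G(C)+\sigma(C)$ with $G(C)$ the total gap length in the subtree below $C$, the constraint becomes $\sigma(C)=\sum_{\text{children }C'}\sigma(C')$, so any nonnegative additive distribution of the slack $\sigma(X)=s-G(X)\ge 0$ works; you do not actually need leaves to have \emph{positive} length, only $\ell(C)\ge 0$, and a node with $\ell(C)=0$ necessarily has at most one child (else the positive gap $\e_{n+1}$ would not fit), so its interval is a single point in $D$ and adjacent gaps do not merge. Second, in your property~(1) argument you should say ``common component of $D_n$'' rather than ``common level-$n$ interval'', since one of $u,v$ might lie in an earlier leaf interval---but there $\bar f$ is constant, so the bound is trivially met.
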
 

\begin{proof} Let $\C_0=\{X\}$ and for every $n\in\IN$ choose a cover $\C_n$ of $X$ by closed connected subsets such that $\mesh(\C_n)\le \frac1{2^{n}}$ and $|\C_n|=S_X(\frac1{2^{n}})$. The connectedness of $X$ and the minimality of $S_X(2^{-n})$ ensure that each set $C\in\C_n$ contains more than one point and by the connectedness, has cardinality of continuum. 





Then for every $n\in\w$ we can choose an injective function $c_n:\C_n\to X$ such that 
\begin{itemize}
\item  $c_n(C)\in C$ for every $C\in\C_n$, and
\item $c_n[\C_n]\cap c_k[\C_k]=\emptyset$ for all $k<n$.
\end{itemize}
For every $n\in\w$ consider the finite set $L_n=\bigcup_{k\le n}c_k[\C_k]$ and let $L=\bigcup_{n\in\w}L_n$. It follows from $\lim_{n\to\infty}\mesh(\C_n)=0$ that the set $L$ is dense in $X$.

For every $n\in\w$ choose a function $\cap_n:\C_{n+1}\to \C_n$ assigning to each set $C\in\C_{n+1}$ a set $C'\in\C_n[C]$. 
 Let $r^{n+1}_n:L_{n+1}\to L_n$ be the function defined by $$r_n^{n+1}(x)=\begin{cases}
x&\mbox{if $x\in L_n$};\\
c_n(\cap_n(c_{n+1}^{-1}(x)))&\mbox{if $x\in L_{n+1}\setminus L_n=c_{n+1}[\C_{n+1}]$}.
\end{cases}
$$  The definition of the map $r^{n+1}_n$ guarantees that $r^{n+1}_n[L_{n+1}\setminus L_n]\subseteq c_n[\C_n]=L_n\setminus L_{n-1}$, where $L_{-1}=\emptyset$.

For every $n<m$ let $r_n^n:L_n\to L_n$ be the identity map of $L_n$ and $r_n^m:L_m\to L_n$ be the function defined by the recursive formula  $r^m_n= r_n^{m-1}\circ r^{m}_{m-1}$. By induction it can be proved that $r^m_n{\restriction}_{L_n}$ is the identity function of $L_n$ and $r^m_n[L_m\setminus L_n]\subseteq L_n\setminus L_{n-1}$.

Let  $r_n:L\to L_n$ be a unique function such that $r_n{\restriction}_{L_m}=r^m_n$ for every $m\ge n$. It follows that $r_n[L\setminus L_n]\subseteq L_n\setminus L_{n-1}$.

\begin{claim}\label{cl:rnx} For every $n\in\w$ and $x\in L$, the set $\{x,r_n(x)\}$ is contained in a connected subset of diameter $<\tfrac{2}{2^n}$ in $X$.
\end{claim}

\begin{proof} Let $m\in\w$ be a unique number such that $x= c_m(C_m)\in c_m[\C_m]=L_m\setminus L_{m-1}$ for a unique set $C_m\in\C_m$. If $m\le n$, then $r_n(x)=x$ and $\{x,r_n(x)\}=\{x\}$ is a connected set of diameter $0<\tfrac2{2^{n}}$. So, we assume that $m>n$. For every $i<m$, let $x_i=r_i(x)$ and observe that $x_i=r^{i+1}_i(x_{i+1})=c_i(C_i)\in L_i\setminus L_{i-1}$ for a unique set $C_i\in\C_i$. The definition of the map $r^{i+1}_i$ ensures that $C_i=\cap_i(C_{i+1})$ and hence $C_i\cap C_{i+1}\ne\emptyset$. Then   the set $C\defeq \bigcup_{i=n}^mC_i$ is connected and has $$\diam(C)\le \sum_{i=n}^m\diam(C_i)\le\sum_{i=n}^m\mesh(\C_i)\le\sum_{i=n}^m\tfrac1{2^i}<\tfrac2{2^{n}}.$$ It is clear that $\{x,r_n(x)\}\subseteq C_m\cup C_n\subseteq C$.
\end{proof}

Let $\preceq_0$ be the unique linear order on the singleton $L_0=c_0[\C_0]=\{c_0(X)\}$. For every $n\in\IN$ choose inductively a linear order ${\preceq_{n}}\subseteq L_{n}\times L_{n}$ on the set $L_{n}$ such that
\begin{itemize}
\item[(i)] ${\preceq_{n-1}}\subseteq {\preceq_{n}}$;
\item[(ii)] the function $r^{n}_{n-1}:(L_{n},\preceq_n)\to (L_{n-1},\preceq_{n-1})$ is order-preserving;
\item[(iii)] for any $x\in L_n$ we have $x\preceq_n r^{n}_{n-1}(x)$.
\end{itemize} 
Then ${\preceq}\defeq\bigcup_{n\in\w}\preceq_n$ is a linear order on the set $L$ such that for every $n\le m$ the maps $r^m_n:L_m\to L_n$ and $r_n:L\to L_n$ are order-preserving, and $x\preceq r_n(x)$ for any $x\in L$ and $n\in\w$. The condition (iii) implies that $c_0(X)$ is the largest element of the set $L$. 





Define the function $w:L\to \IR$ letting $w(c_n(C))\defeq\e_n$ for any $C\in\C_n$ and $n\in\w$. Consider the map $$g:L\to\IR,\quad g:x\mapsto \sum_{y\prec x}w(y),$$ and observe that $g[L]\subseteq [0,s]$ where  $$g(c_0(X))=g(\max L)=\sum_{n=1}^\infty\sum_{x\in c_n[\C_n]}w(x)=\sum_{n=1}^\infty|\C_n|\cdot\e_n=\sum_{n=1}^\infty S_X(\tfrac1{2^{n}}){\cdot}\e_n=s$$and hence $s\in g[L]$. The map $g:L\to [0,s]$ is increasing with respect to the linear order $\prec$ on $L$, so we can consider the inverse map $f\defeq g^{-1}:g[L]\to L$.

\begin{claim}\label{cl:gu} For any $n\in\w$ and $x,y\in L$ with $|g(x)-g(y)|<\e_n$ we have $d_X(x,y)< \frac4{2^n}$.
\end{claim}

\begin{proof}  Without loss of generality, we can assume that $x\prec y$ in the linearly ordered set $(L,\preceq)$. Find unique numbers $n_x,n_y\in\IN$ such that $x\in c_{n_x}[\C_{n_x}]$ and $y\in c_{n_y}[\C_{n_y}]$. 
It follows from $\e_n>|g(y)-g(x)|=\sum_{x\preceq z\prec y}w(z)\ge w(x)=\e_{n_x}$ that $n_x>n$.

Let $k\in\w$ be the largest number such that $r_k(x)= r_k(y)$. Since $r_0(x)=c_0(X)=r_0(y)$ and $r_i(x)=x\ne y=r_i(y)$ for all $i\ge\max\{n_x,n_y\}$, the number $k$ is well-defined and $k<\max\{n_x,n_y\}$. 

The maximality of $k$ guarantees that $r_{k+1}(x)\ne r_{k+1}(y)$ and hence $r_{k+1}(x)\prec  r_{k+1}(y)$. We claim that $r_{k+1}(x)\prec y$. Indeed, in the opposite case, $y\preceq r_{k+1}(y)$ and $r_{k+1}(y)\preceq r_{k+1}\circ r_{k+1}(x)=r_{k+1}(x)$, which contradicts the strict inequality $r_{k+1}(x)\prec r_{k+1}(y)$. 

The definition of the linear order $\preceq$ on the set $L$ ensures that $x\preceq r_{k+1}(x)\prec y$.   If $k<n$, then   
$$|g(y)-g(x)|=\sum_{x\preceq z\prec y}w(z)\ge w(r_{k+1}(x))=\e_{k+1}\ge\e_n,$$which contradicts the choice of the points $x,y$. This contradiction shows that $k\ge n$ and hence 
 $$d_X(x,y)\le d_X(x,r_k(x))+d_X(r_k(x),r_k(y))+d_X(r_k(y),y) <\tfrac2{2^k}+0+\tfrac{2}{2^k}=\tfrac4{2^k}\le\tfrac4{2^n},$$
according to Claim~\ref{cl:rnx}.
\end{proof}

Claim~\ref{cl:gu} implies 

\begin{claim}\label{cl:fu} For any $n\in\w$ and $u,v\in g[L]$ with $|u-v|<\e_n$ we have $d_X(f(u),f(v))< \frac4{2^n}$.
\end{claim}

By Claim~\ref{cl:fu}, the function $f:g[L]\to L$ is uniformly continuous and hence admits a continuous extension $\bar f:D\to X$, defined on the closure $D=\overline{g[L]}$ of the set $g[L]$ in the interval $[0,s]$. The compactness of $D$ and the density of $L$ in $X$ ensure that the map $\bar f:D\to X$ is surjective.

\begin{claim} $\{0,s\}\subseteq D$.
\end{claim}

\begin{proof} Given any $\e>0$, find a nonempty finite set $F\subseteq L$ such that $\sum_{x\in L\setminus F}w(x)<\e$. Then $g(\min F)\le \sum_{x\in L\setminus F}w(x)<\e$ and hence $0\in \overline{g[L]}=D$.

The inclusion $s\in D$ follows from $s=g(c_0(X))=g(\max L)\in g[L]\subseteq D$.
\end{proof}

Claim~\ref{cl:fu} implies 

\begin{claim} For any $n\in\w$ and $u,v\in D$ with $|u-v|<\e_n$ we have $d_X(\bar f(u),\bar f(v))\le \frac4{2^n}$.
\end{claim}

\begin{claim} For any real numbers $u<v$ with $[u,v]\cap D=\{u,v\}$ there exists $n\in\IN$ such that $v-u=\e_n$ and the doubleton $\{\bar f(u),\bar f(v)\}$ is contained in a connected subset $C\subseteq X$ of $\diam(C)\le\tfrac8{2^n}$.
\end{claim}

\begin{proof} Consider the subsets $A=\{x\in L:g(x)\le u\}$ and $B=\{x\in L:v\le g(x)\}$  of the linearly ordered set $(L,\preceq)$  and observe that $A,B$ are nonempty, $A\cup B=L$ and $x\prec y$ for any $x\in A$ and $y\in B$.  Then the four cases are possible.
\smallskip

1. The set $A$ has the largest element $\max A$ and the set $B$ has the smallest element $\min B$ in the linearly ordered set $(L,\preceq)$. Then $g(\max A)=\sum_{x\prec \max A}w(x)\le u$ and $g(\min B)=\sum_{x\prec \min B}w(x)=\sum_{z\preceq\max A}w(z)=g(\max A)+w(\max A)\ge v$, which implies  $g(\max A)=u$, $g(\max B)=v$, and $w(\max A)=v-u$. Find unique numbers $n,m\in\w$ such that $\max A\in c_n[\C_n]$ and $\min B\in c_m[\C_m]$ and observe that $v-u=w(\max A)=\e_n$. 
We claim that $r_{n-1}(\max A)=r_{n-1}(\min B)$. To derive a contradiction, assume that  $r_{n-1}(\max A)\ne r_{n-1}(\min B)$ and hence $r_{n-1}(\max A)\prec r_{n-1}(\min B)$, by the order-preserving property of the map $r_{n-1}$. Taking into account that $\max A\prec r_{n-1}(\max A)$ and $\{x\in L:\max A\prec x\prec \min B\}=\emptyset$, we conclude that $\min B\preceq r_{n-1}(\max A)$ and hence $r_{n-1}(\min B)\preceq r_{n-1}(r_{n-1}(\max A))=r_{n-1}(\max A)$, which contradicts the strict inequality $r_{n-1}(\max A)\prec r_{n-1}(\min B)$. This contradiction shows that $r_{n-1}(\max A)=r_{n-1}(\min B)$. By Claim~\ref{cl:rnx}, there are connected sets $C_A,C_B\subseteq X$ of diameter $<\frac2{2^{n-1}}$ such that $\{\max A,r_{n-1}(\max A)\}\subseteq C_A$ and $\{\min B,r_{n-1}(\min B)\}\subseteq C_B$. Since $r_{n-1}(\max A)=r_{n-1}(\min B)\in C_A\cap C_B$, the set $C=C_A\cup C_B$ is connected and has diameter $\diam(C)\le\diam (C_A)+\diam (C_B)<\frac8{2^n}$. It is clear that $\{\bar f(u),\bar f(v)\}=\{\max A,\min B\}\subseteq C$. 
\smallskip

2. The set $A$ has the largest element $\max A$ but the set $B$ has no smallest element. In this case $\max A=\inf B$, $g(\max A)\le u$ and $v<g(x)\ge g(\max A)+w(\max A)$ for every $x\in B$, which implies that $g(\max A)=u$, $v=g(\max A)+w(\max A)$ and hence $w(\max A)=v-u$. Find a unique number $n\in\IN$ such that $\max A\in c_n[\C_n]$ and observe that $v-u=w(\max A)=\e_n$.  The definition of the linear order $\preceq$ implies that $\max A\prec r_{n-1}(\max A)$. Then for any $x\in L$ with $\max A\preceq  x\preceq r_{n-1}(\max A)$ we have $r_{n-1}(\max A)\preceq r_{n-1}(x)\preceq r_{n-1}(r_{n-1}(\max A))=r_{n-1}(\max A)$ and hence $r_{n-1}(x)=r_{n-1}(\max A)$. By Claim~\ref{cl:rnx}, there exists a connected subspace $C_x\subseteq X$ of $\diam(C_x)<\frac2{2^{n-1}}$ such that $\{x,r_{n-1}(\max A)\}=\{x,r_{n-1}(x)\}\subseteq C_x$. Then the set $C=\bigcup\{C_x:\max A\preceq x\preceq r_{n-1}(x)\}$ is connected, has diameter $\diam (C)\le \frac8{2^n}$ and contains the set $[\max A,r_{n-1}(\max A)]=\{x\in L:\max A\preceq x\preceq r_{n-1}(\max A)\}$. The definition of the map $\bar f:D\to \bar L$ ensures that $\bar f(v)\in\overline{[\max A,r_{n-1}(\max A)]}\subseteq\overline C$. Taking into account that $\max A=f(u)=\bar f(u)$, we conclude that $\{\bar f(u),\bar f(v)\}\subseteq \overline C$ and $\diam (\overline C)=\diam (C)\le\frac8{2^n}$.
\smallskip

3. The set $A$ does not have the largest element but the set $B$ has the smallest element $\min B$. Taking into account that $\sum_{x\prec a}w(x)=g(a)\le u$ for all $a\in A$, we conclude that $g(\min B)=\sum_{a\in A}w(a)\le u$ and hence $\min B\in A$, which contradicts  $\min B\in B$.
\smallskip

4. The set $A$ does not have the largest element and the set $B$ does not have the smallest element. Since $\sum_{x\in L}w(x)<\infty$, there exists $n\in\w$ such that $\sum_{x\in L\setminus L_n}w(x)<v-u$. Choose elements $x\in A$ and $y\in B$ such that the order segment $[x,y]=\{z\in L:x\preceq z\preceq y]$ is disjoint with the finite set $L_n$. Then $$v-u\le g(y)-g(x)=\sum_{x\preceq z\prec y}w(x)\le\sum_{z\in L\setminus L_n}w(x)<v-u,$$which is a contradiction completing the proof of the claim.
\end{proof}

\end{proof}

\section{A controlled Hahn-Mazurkiewicz Theorem}

In this section we combine Lemmas~\ref{l:path} and \ref{l:gap} in order to prove the following lemma that will imply the controlled Hahn-Mazurkiewicz Theorem~\ref{t:main}.

\begin{lemma}\label{l:sur} Let $(\delta_n)_{n\in\w}$ be a decreasing sequence of positive real numbers and $X$ be a compact connected metric space of diameter $\le 1$. If $s\defeq \sum_{n=1}^\infty S_X(\tfrac1{2^{n}})\sum_{m=n}^\infty S_X(\tfrac1{2^m})\delta_m<\infty$, then there exists a continuous surjective map $f:[0,s]\to X$ such that for any $n\in\w$ and real numbers $u,v\in [0,s]$ with $|u-v|<\delta_n$ we have $d_X(f(u),f(v))\le \frac{2^5}{\,2^n}$.
\end{lemma}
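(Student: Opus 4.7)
The plan is to combine Lemmas~\ref{l:gap} and \ref{l:path}: Lemma~\ref{l:gap} supplies a continuous surjection $\bar f$ from a closed subset $D\subseteq[0,s]$ onto $X$ with good modulus control together with explicit geometric ``gap data'', and then Lemma~\ref{l:path} is used to fill each complementary interval of $D$ by a short controlled path inside the witness set furnished for that gap.

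Set $\e_n\defeq\sum_{m=n}^\infty S_X(2^{-m})\delta_m$ for $n\in\IN$, so that $(\e_n)_{n\in\IN}$ is strictly decreasing and positive with $\sum_{n=1}^\infty S_X(2^{-n})\e_n=s<\infty$. Apply Lemma~\ref{l:gap} to this sequence to obtain a closed set $D\subseteq[0,s]$ with $\{0,s\}\subseteq D$ and a continuous surjection $\bar f\colon D\to X$ satisfying: $d_X(\bar f(u),\bar f(v))\le 4/2^n$ whenever $u,v\in D$ and $|u-v|<\e_n$; and for every gap $[u,v]\cap D=\{u,v\}$ there exist $n=n_{u,v}\in\IN$ and a connected set $C=C_{u,v}\subseteq X$ of diameter $\le 8/2^n$ with $v-u=\e_n$ and $\{\bar f(u),\bar f(v)\}\subseteq C$. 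For each gap, apply Lemma~\ref{l:path} with $F=C$, $a=\bar f(u)$, $b=\bar f(v)$, and $m=n$ (legal because $\sum_{k\ge 0}S_X(2^{-k})\delta_k<\infty$, as each $S_X(2^{-k})\delta_k$ is a term of $\e_k$): this yields $s'\le\sum_{k=n}^\infty S_X(2^{-k})\delta_k=\e_n=v-u$ and a continuous path $p\colon[0,s']\to O[C;3/2^n]$ with $p(0)=\bar f(u)$, $p(s')=\bar f(v)$, and $d_X(p(x),p(y))\le 6/2^k$ for all $k\ge n$ and $|x-y|<\delta_k$. Define $f(t)\defeq p(t-u)$ for $t\in[u,u+s']$, $f(t)\defeq\bar f(v)$ for $t\in[u+s',v]$, and $f\defeq\bar f$ on $D$. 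Continuity and surjectivity of $f\colon[0,s]\to X$ are immediate.

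The core verification is the modulus bound: given $u<v$ in $[0,s]$ and $n\in\w$ with $|u-v|<\delta_n$, we must show $d_X(f(u),f(v))\le 2^5/2^n$. Note that $\delta_n\le S_X(2^{-n})\delta_n\le\e_n$, so the Lemma~\ref{l:gap} estimate applies at scale $n$ whenever both points lie in $D$. Three cases arise. (A) If $u,v\in D$, Lemma~\ref{l:gap}(1) gives $d_X(f(u),f(v))\le 4/2^n$. (B) If $u,v$ lie in the closure of a common gap $[a,b]$ with $b-a=\e_j$, then $f([a,b])\subseteq O[C_{a,b};3/2^j]$ has diameter at most $\diam(C_{a,b})+6/2^j\le 14/2^j$; when $j\ge n$ this yields $d_X(f(u),f(v))\le 14/2^n$, and when $j<n$ the Lemma~\ref{l:path} bound (together with the trivially zero modulus on the constant tail) yields $d_X(f(u),f(v))\le 6/2^n$. (C) If $u,v$ lie in different gaps, or one lies in $D$ and the other in a gap, insert the nearest gap endpoints $u\le p\le q\le v$ in $D$ (with $|p-q|\le|u-v|<\delta_n$) and apply cases (A) and (B) to the at most three subsegments, giving $d_X(f(u),f(v))\le 14/2^n+4/2^n+14/2^n=32/2^n=2^5/2^n$.

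The main subtlety is the synchronization of scales between the two lemmas: the gap length $\e_n=v-u$ furnished by Lemma~\ref{l:gap}(2) must be at least the path length $s'\le\sum_{k\ge n}S_X(2^{-k})\delta_k$ produced by Lemma~\ref{l:path} at scale $m=n$, so that the constant-tail extension is well-defined and the path's image stays in $O[C;3/2^n]$ of diameter $\le 14/2^n$. The specific choice $\e_n\defeq\sum_{m\ge n}S_X(2^{-m})\delta_m$ makes these match, dictates the formula for $s$, and lets all contributions combine cleanly into the final constant $2^5$.
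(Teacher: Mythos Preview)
Your proof is correct and follows essentially the same approach as the paper: define $\e_n=\sum_{m\ge n}S_X(2^{-m})\delta_m$, apply Lemma~\ref{l:gap} to get the surjection on $D$, fill each gap with a path from Lemma~\ref{l:path} at scale $m=n(u,v)$, and verify the modulus by splitting according to where the two test points lie relative to $D$ and the gaps, obtaining the same constants $4$, $14$, $6$, and ultimately $32=2^5$. Your treatment is in fact slightly more careful than the paper's in one place: Lemma~\ref{l:path} only yields a path of length $s'\le\e_n=v-u$, and you explicitly extend by a constant on $[u+s',v]$, whereas the paper silently writes $p_{u,v}$ as a map on all of $[u,v]$.
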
 

\begin{proof} For every $n\in\w$ let $\e_n\defeq\sum_{m=n}^\infty S_X(\frac1{2^m})\delta_m$ and observe that $\delta_n<\e_n$. It is clear that the sequence $(\e_n)_{n\in\w}$ is decreasing and $s=\sum_{n=1}S_X(\frac1{2^n})\e_n$. By Lemma~\ref{l:gap}, there exists a surjective function $\varphi:D\to X$ defined on a closed subset $D\subseteq[0,s]$ such that
\begin{enumerate}
\item[(i)] $\{0,s\}\subseteq D$;
\item[(ii)] for any $n\in\IN$ and real numbers $u,v\in D$ with $|u-v|<\e_n$ we have $d_X(\varphi(u),\varphi(v))\le \frac4{2^n}$;
\item[(iii)] for any real numbers $u<v$ in $D$ with $[u,v]\cap D=\{u,v\}$ there exists $n\in\IN$ such that $v-u=\e_n$ and the doubleton $\{\varphi (u),\varphi (v)\}$ is contained in a connected subset $F\subseteq X$ of diameter $\diam(F)\le\tfrac8{2^n}$.
\end{enumerate}
Let $W$ be the set of all pairs $(u,v)\in D\times D$ such that $u<v$ and $[u,v]\cap D=\{u,v\}$. By (ii), for every pair $(u,v)\in W$ there exists a unique number $n(u,v)\in\IN$ such that $v-u=\e_{n(u,v)}$ and the doubleton $\{\varphi(u),\varphi(v)\}$ is contained in a connected set $F_{u,v}\subseteq X$ of diameter $\le\frac{8}{2^{n(u,v)}}$.

By Lemma~\ref{l:path}, there exists a continuous function $p_{u,v}:[u,v]\to O[F_{u,v},\tfrac3{2^{n(u,v)}}]$ such that $p_{u,v}(u)=\varphi(u)$, $p_{u,v}(v)=\varphi(v)$ and for every $n\ge n(u,v)$ and 
$x,y\in [u,v]$ with $|x-y|<\delta_n$ we have $d_X(p_{u,v}(x),p_{u,v}(y))\le \frac6{2^n}$.

Extend the function $\varphi:D\to X$ to a unique function $f:[0,s]\to X$ such that $f{\restriction}_{[u,v]}=p_{u,v}$ for all $(u,v)\in W$. The surjectivity of the function $\varphi$ implies the surjectivity of the function $f$.

\begin{claim} For every $n\in\w$ and $x,y\in [0,s]$ with $|x-y|<\delta_n$ we have $d_X(f(x),f(y))\le\tfrac{2^5}{\,2^n}$.
\end{claim}

\begin{proof} Separately we consider five cases.
\smallskip

1. If $x,y\in D$, then the inequality $d_X(f(x),f(y))\le \frac4{2^n}<\frac{2^5}{2^n}$ follows from (ii) and the inequality $\delta_n\le\e_n$.
\smallskip

2. If $[x,y]\subseteq [u,v]$ for some pair $(u,v)\in W$, then $|x-y|\le|u-v|=\e_{n(u,v)}$. If $n>n_{u,v}$, then the choice of the map $p_{u,v}$ ensures that $$d_X(f(x),f(y))=d(p_{u,v}(x),p_{u,v}(y))\le \tfrac6{2^n}<\tfrac{2^5}{2^n}.$$ If $n\le n_{u,v}$, then $$d_X(f(x),f(y))\le\diam\, p_{u,v}\big[[u,v]\big]\le\diam (O[F_{u,v};\tfrac3{2^{n(u,v)}}])\le\diam(F_{u,v})+\tfrac{6}{2^{n(u,v)}}\le
\tfrac{14}{2^n}<\tfrac{2^5}{2^n}.$$
\smallskip

3. If $x\in D$, $y\notin D$ and $[x,y]\not\subseteq [u,v]$ for all $(u,v)\in W$, then we can find a unique pair $(u,v)\in W$ such that $x<u<y<v$. Since $\max\{|x-u|,|u-y|\}<|x-y|<\delta_n$, two preceding cases imply 
$$d_X(f(x),f(y))\le d_X(f(x),f(u))+d_X(f(u),f(y))\le \tfrac4{2^n}+\tfrac{14}{2^n}=\tfrac{18}{2^n}<\tfrac{2^5}{2^n}.$$
\smallskip

4. If $x\notin D$, $y\in D$ and $[x,y]\not\subseteq[u,v]$ for all $(u,v)\in W$, then there exists a unique pair $(u,v)\in W$ such that $u<x<v<y$ and the first two cases imply
$$d_X(f(x),f(y))\le d_X(f(x),f(v))+d_X(f(v),f(y))\le \tfrac{14}{2^n}+\tfrac4{2^n}=\tfrac{18}{2^n}<\tfrac{2^5}{2^n}.$$
\smallskip

5. If $x,y\notin D$ and $[x,y]\not\subseteq [u,v]$ for all $(u,v)\in W$, then we can find unique pairs $(u_x,v_x),(u_y,v_y)\in W$ such that $u_x<x<v_x\le u_y<y<v_y$. Since $\max\{|x-v_x|,|v_x-u_y|,|u_y-y|\}<|x-y|<\delta_n$, the first two cases imply
$$d_X(f(x),f(y))\le d_X(f(x),f(v_x))+d_X(f(v_x),f(u_y))+d_X(f(u_y),f(y))\le \tfrac{14}{2^n}+\tfrac{4}{2^n}+\tfrac{14}{2^n}=\tfrac{2^5}{2^n}.$$
\end{proof}
\end{proof}

\section{Proof of Theorem~\ref{t:main}}\label{s:main}

Let $X$ be a nonempty compact connected metric space of diameter $\le 1$ and $\Omega:\IR_+\to\IR_+$ be an increasing function such that $(0,1]\subseteq\Omega[\IR_+] $ and $$s\defeq \sum_{n=1}^\infty S_X(\tfrac1{2^n})\sum_{m=n}^\infty S_X(\tfrac1{2^m})\,\Omega^{-1}(\min\{1,\tfrac{2^6}{2^m}\})<\infty.$$

We shall construct a surjective function $f:[0,s]\to X$ with continuity modulus $\w_f\le\Omega$.

 For every $n\in\w$ consider the number $\delta_n\defeq\Omega^{-1}(\min\{1,\frac{2^6}{2^n}\})$ which is well-defined because $(0,1]\subseteq\Omega[\IR_+]$. Since the function $\Omega$ is increasing, the sequence $(\delta_n)_{n\in\w}$ is decreasing (but not strictly). By Lemma~\ref{l:sur}, there exists a surjective function $f:[0,s]\to X$ such that for every $n\in\IN$ and $x,y\in[0,s]$ with $|x-y|<\delta_n$ we have $d_X(f(x),f(y))\le\frac{2^5}{2^n}$. 

We claim that $d_X(f(x),f(y))\le\Omega(|x-y|)$ for any points $x,y\in [0,s]$. If $|x-y|\ge\Omega^{-1}(1)$, then $d_X(f(x),f(y))\le\diam(X)\le 1=\Omega(\Omega^{-1}(1))\le\Omega(|x-y|)$ and we are done. So, assume that $|x-y|<\Omega^{-1}(1)=\delta_6$ and then $\delta_{n+1}\le |x-y|<\delta_n$ for a unique number $n\ge 6$. The choice of the function $f$ guarantees that 
$$d_X(f(x),f(y))\le\tfrac{2^5}{2^n}=\Omega\big(\Omega^{-1}(\tfrac{2^5}{2^n})\big)=\Omega\big(\Omega^{-1}(\tfrac{2^6}{2^{n+1}})\big)=\Omega(\delta_{n+1})\le\Omega(|x-y|).$$

\end{document}